\documentclass[11pt]{amsart}

\usepackage{lmodern}
\usepackage[T1]{fontenc}
\usepackage{amsmath,amssymb,amsthm}
\usepackage{booktabs}
\usepackage[margin=1.15in]{geometry}
\usepackage{microtype}
\usepackage{tikz}
\usetikzlibrary{calc}
\usepackage[hidelinks]{hyperref}

\tikzset{
  cell/.style  = {draw, minimum size=11mm, inner sep=0pt, font=\footnotesize},
  dom/.style   = {draw, very thick},
  conn/.style  = {draw, dashed},
  pt/.style    = {circle, fill, inner sep=1.15pt},
  leaf/.style  = {circle, draw, fill=white, inner sep=1.15pt},
  strip/.style = {gray!55, dotted, thick},
}

\theoremstyle{plain}
\newtheorem{theorem}{Theorem}[section]
\newtheorem{proposition}[theorem]{Proposition}
\newtheorem{lemma}[theorem]{Lemma}
\newtheorem{corollary}[theorem]{Corollary}
\theoremstyle{definition}
\newtheorem{definition}[theorem]{Definition}
\newtheorem{remark}[theorem]{Remark}

\newcommand{\code}[1]{\texttt{#1}}
\newcommand{\Av}{\operatorname{Av}}
\newcommand{\gr}{\operatorname{gr}}
\newcommand{\Lam}{\Lambda}
\newcommand{\Qc}{Q}

\begin{document}

\title[A new lower bound for the growth rate of Av(1324)]
      {A new lower bound for the growth rate of
       \texorpdfstring{$\Av(1324)$}{Av(1324)}}
\author{Charles C. Norton}
\email{cnorton2@binghamton.edu}
\date{September 2026}
\subjclass[2020]{Primary 05A05, 05A16; Secondary 05A15, 60E15}
\keywords{pattern avoidance, 1324, Stanley--Wilf limit, growth rate, staircase
grid class, Harris inequality, FKG}

\begin{abstract}
The growth rate of $\Av(1324)$ is the last unknown Stanley--Wilf limit of a
length-four pattern. Since 2020 the best rigorous lower bound has been the
$10.271012$ of Bevan, Brignall, Elvey Price and Pantone; we raise it to
$10.412263$. Their construction lays $1324$-avoiders along an infinite
staircase of two-cell blocks alternating with single connecting cells, and
forbids a $1324$ by a local condition on how the points of a block interleave
with the components of the cell beside it. Certain points of a block, its
leaves, may be exempted from that condition. They exempt them on one axis only,
and report that they could not count the possibilities when both are exempted.
This paper exempts them on both axes and counts the result. The second
relaxation is their own argument read in the other axis, and the Harris
correlation inequality bounds the joint count below by the product of the two
counts taken separately, at the cost of one factor of the component generating
function.
\end{abstract}

\maketitle

\section{Introduction}

Write $\Av(1324)$ for the class of permutations containing no subsequence order
isomorphic to $1324$, and $\Av_n(1324)$ for its members of length $n$. By
Marcus and Tardos \cite{marcustardos} together with Arratia \cite{arratia} the
limit
\[
  \gr(\Av(1324)) \;=\; \lim_{n \to \infty} |\Av_n(1324)|^{1/n}
\]
exists. It is the one Stanley--Wilf limit of a length-four pattern that remains
unknown; the other two Wilf classes were settled in the 1990s by Gessel
\cite{gessel} and B\'ona \cite{bona1342}. The counting sequence is
\code{A061552} \cite{oeis}, known exactly for $n \le 50$ \cite{cgzj}. Its
analysis suggests $|\Av_n(1324)| \sim B \mu^n \mu_1^{\sqrt n} n^{g}$ with
$\mu = 11.600 \pm 0.003$. Proved, that form would put the sequence outside the
$P$-recursive ones: by Garrabrant and Pak \cite[Thm.~9]{garrabrantpak} an
integer $P$-recursive sequence of at most exponential growth carries no
stretched-exponential factor.

The upper bound has been $13.5$ and the lower
$10.271012$ since the work of Bevan, Brignall, Elvey Price and Pantone
\cite{bbep}, whose structural characterisation of $\Av(1324)$ as a subclass of
an infinite staircase grid class underlies everything below. Before them the
upper bound came down from B\'ona's $288$ \cite{bona288} through
\cite{cjs, bona1393} to $13.74$ \cite{bona1374}, and the lower rose from $9$
\cite{bonalayered} through \cite{albert} to $9.81$ \cite{bevan}. We refer to that
paper throughout as BBEP and adopt its notation, citing its results by the
sequential numbering of the journal version. The arXiv version numbers by
section instead: its Theorem 3.1, Theorem 5.1 and Lemma 7.4 are the Theorem 2,
Theorem 9 and Lemma 19 cited here.

Sharper values are available under hypotheses of their own. Frankl\'in
\cite{franklin} encodes $1324$-avoiders as walks in a directed graph, making
the growth rate a spectral radius, and a weighted quotient of it gives
$10.418$, conditional on the conjecture that the weighted walks never outnumber
the unweighted ones. Bostan, Elvey
Price, Guttmann and Maillard \cite{bepgm} give numerical evidence that the
counting sequence is a Stieltjes moment sequence; on that hypothesis the
coefficients of the associated continued fraction give $10.302$, and $10.607$
if two of their subsequences are assumed increasing. On the upper side the
conjecture of Claesson, Jel\'inek and Steingr\'imsson \cite{cjs} on the
$1324$-avoiders with a fixed number of inversions would give
$e^{\pi\sqrt{2/3}} \approx 13.002$; Linusson and Verkama \cite{lv} prove half
of it, and Claesson, Linusson, Ulfarsson and Verkama \cite{cluv} exhibit the
first nontrivial inversion-monotone sets containing $1324$. The bound proved
here is unconditional.

BBEP close their paper by naming three ways their bound might be improved. The
second is the relaxation of their interleaving rule in the vertical direction,
of which they write that they ``have not been able to determine a lower bound
for the number of possibilities'' and that ``it seems likely that the
one-dimensional solution in which leaves are distributed equitably between the
strips does not carry over to interleaving in two directions.'' This paper
answers that question. The relaxation itself, Lemma~\ref{lem:valid} below, is
their own one-directional argument read in the other axis. What was open is the
count, which Theorem~\ref{thm:harris} supplies. At a fixed value of the size
variable a connecting cell is a sequence of independent components, and both
interleaving counts are increasing functions of the component sizes; Harris'
inequality therefore bounds the joint count below by the product of its two
marginals. Each neighbour then sits in its own copy of the one-directional
machinery and is minimised there, so no two-directional analogue of the
equitable solution is needed. The separation costs one factor of the component
generating function per component. It is the only step at which the two counts
are taken apart. Their first problem, the distribution of $k$-leaf strips, and
their third, the enumeration of three-celled trominoes, stand as they left
them.

\begin{theorem}\label{thm:main}
$\gr(\Av(1324)) \ge 10.412263$.
\end{theorem}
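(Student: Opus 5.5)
The plan is to rerun the BBEP construction with both relaxations switched on, and to get the growth rate from the same singularity analysis they use. First I would recall their framework. A gridded permutation in the staircase, built from dominoes (two-cell blocks) alternating with connecting cells, avoids $1324$ provided each block interleaves with the components of its neighbouring connecting cells according to their local rule. BBEP exempt leaves from this rule on one axis, and their Theorem~9 gives a lower bound for the count of such staircases. That bound is expressed through the domino generating function and the component generating function of a connecting cell, and the bound on the growth rate is the reciprocal of the radius of convergence of the resulting series. My first step is to invoke Lemma~\ref{lem:valid}: if leaves are exempted on both axes, every resulting gridded object still avoids $1324$. Concretely, BBEP's Lemma~19 transposed to the vertical axis is compatible with the horizontal version, so each structure we count is a genuine $1324$-avoider, and the gridding loses at most a subexponential factor.

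The second step is the count, via Theorem~\ref{thm:harris}. Fix the size variable $z$ in $(0,\rho)$. A connecting cell is then a random sequence of independent components, each drawn with Boltzmann weights. The number of admissible horizontal interleavings with the block on one side is a coordinatewise increasing function of the component sizes, and so is the number of vertical interleavings with the block on the other side. Harris' inequality therefore gives
\[
\mathbb{E}[H V] \ge \mathbb{E}[H]\,\mathbb{E}[V].
\]
The price is one extra factor of the component generating function per component, because the two marginals each carry a normalisation. Each marginal is exactly BBEP's one-directional quantity, so I minimise it with their equitable distribution of leaves between strips. Substituting the product bound into their transfer-matrix or sequence construction gives an explicit series $F(z)$ whose coefficients are dominated by $|\Av_n(1324)|$ up to subexponential factors.

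The last step is numerical: locate the dominant singularity $z_0$ of the bound. It comes either from the sequence construction blowing up, where some expression equals $1$, or from a singularity inherited from the domino generating function, whose growth rate $27/4$ is known. Then verify rigorously, using interval arithmetic with certified truncation errors on the component and domino series, that $1/z_0 \ge 10.412263$. I expect the main obstacle to be the Harris step. Both counts must be shown to be increasing in the same product-order variables, even though leaves are shared between the two axes and the equitable split is chosen adaptively. If that monotonicity fails in general, my first fallback is to fix the leaf allocation deterministically before applying Harris, and to accept a slightly weaker constant. The certified numerics near $z_0$ are the secondary obstacle.
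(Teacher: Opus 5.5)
You have the paper's ingredients: the vertical relaxation of Lemma~\ref{lem:valid}, Harris with Boltzmann-distributed components at a cost of $\Qc(z)^{-c}$, and BBEP's equitable minimisation applied to each marginal. But the step that turns them into a growth-rate bound fails as you state it. The law $\mu_z$ of the components depends on $z$, so Theorem~\ref{thm:harris} compares \emph{values} at one fixed $z$, not coefficients. The bound it produces carries $\Qc(z)^{-c}$. Since $\Qc(1-\Qc) = z$, one has $1/\Qc(z) = (1-\Qc(z))/z$, which has negative coefficients, so the bound need not have non-negative coefficients at all (Remark~\ref{rem:values}). There is therefore no series $F(z)$ with coefficients dominated by $|\Av_n(1324)|$ whose dominant singularity you could locate.

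What works, and what the paper does, is to fix $z_0$ and argue by contradiction. Take the family $\mathcal{P}_{k,m}$: balanced dominoes from $\mathcal{B}^{\alpha,\beta}_m$ joined by connecting cells with exactly $c_m$ components. Then $P_{k,m}(z_0) \ge F_{k,m}(z_0)$ holds as numbers. Also $P_{k,m} \le \mathcal{A}_k$ coefficientwise, and for fixed $k$, $\mathcal{A}_k$ is within a polynomial factor of $\mathcal{A}$. If $z_0$ were inside the radius of $\mathcal{A}$, then $F_{k,m}(z_0)^{1/m} \le \mathcal{A}_k(z_0)^{1/m} \to 1$. That contradicts $\lim_k(\limsup_m F_{k,m}(z_0)^{1/m})^{1/2k} = e^{\Phi(z_0,1,\kappa)} > 1$.

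This also shows what your numerical step lacks: a quantity to certify. Harris in product form needs a fixed number $c$ of independent components. The cost $\Qc(z)^{-c}$ is paid at a single $c$ only because one common coefficient $q^{c_m}$, with $c_m/m \to \kappa$, is extracted from both marginals $\prod_j H_j(z,q)^{e_j(m)}$ by BBEP's Lemma 19. That is their coefficient-extraction lemma, not a validity statement as you cite it. The extraction gives
$\Phi(z,1,\kappa) = 2\log(27z/4) + 2\Lam(z,f,\kappa) - \kappa\log\Qc(z)$.
Certifying $\Phi(z_0,1,\kappa) > 0$ from finitely many evaluations needs Lemma~\ref{lem:saddle}. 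The bracket $B$ is convex in $\log q$ and $B'$ changes sign on $[1,9]$, so a tangent at a rational $q_0$ bounds $\Lam = \min_q B$ from below. The closed forms of $H_0, H_2, H_3$ make series truncation unnecessary. The dominoes enter only through $|\mathcal{B}^{\alpha,\beta}_m| = (27/4)^{2m+o(m)}$, and the singularity of their generating function at $4/27$ plays no role.

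Finally, the obstacle you anticipate does not arise. Each domino cell borders at most one connecting cell, so Harris is applied with two distinct, fixed cells $D_{\mathrm h}$ and $D_{\mathrm v}$, and the only randomness is in the component sizes. Monotonicity is the gap-shifting injection of Lemma~\ref{lem:mono}. The equitable profile enters only afterwards, coefficientwise and marginal by marginal, through Lemma~\ref{lem:equitbound}. Your ``fallback'' is already the paper's order of operations.
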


The proof occupies Sections~\ref{sec:relax} to~\ref{sec:bound}.
Section~\ref{sec:background} fixes the staircase, the interleaving operators
and the equitable profile, all of them BBEP's. Section~\ref{sec:relax} is the
new material: it proves the second relaxation valid, closes the whole
decomposition under it, and supplies the count. Section~\ref{sec:exponent}
assembles the exponent and states the proposition that carries a positive value
of it to a bound on the growth rate. Section~\ref{sec:bound} carries out the
evaluation and checks it against two values known independently.

Two earlier versions, \code{arXiv:2608.20292v1} and~\code{v2}, reported
$10.617$ and $10.466425$, from a joint transfer operator for the two neighbours
of a connecting cell and from tilting the domino ensemble. Both are withdrawn
as unproved and neither should be cited; neither construction is used here, the
strip profile entering only through BBEP's equitable minimiser. Everything the
theorem depends on is the single evaluation of Table~\ref{tab:cert}: three
functions displayed in closed form in Section~\ref{sec:bound}, read with their
$q$-derivatives at three points, and a tangent-line bound on a convex function
of one variable.

\section{Background}\label{sec:background}

\subsection{The staircase}

The descending $(\Av(213), \Av(132))$ staircase is the infinite grid class whose
diagonal cells avoid $213$, whose subdiagonal cells avoid $132$, and whose other
cells are empty. BBEP show by an explicit greedy gridding that $\Av(1324)$ is
contained in it \cite[Prop.~1]{bbep}. Index the cells $k = 1, 2, \ldots$
descending from the top left. Consecutive cells meet in one of two ways. An odd
cell and its successor share a column, interleave freely in position, and the
odd cell holds the larger values; we call that a $(V)$ pair. An even cell and
its successor share a row, interleave freely in value, and the even cell comes
first in position; we call that an $(H)$ pair. Cells at distance at least two
are separated in both coordinates. Figure~\ref{fig:staircase} shows the first
nine cells together with the decomposition of Section~\ref{sec:refined}, which
repeats with period six.

\begin{figure}[t]
\centering
\begin{tikzpicture}[x=13mm, y=-13mm]
  \foreach \c/\r/\k/\cls in {%
    1/1/1/{\Av(213)}, 1/2/2/{\Av(132)}, 2/2/3/{\Av(213)},
    2/3/4/{\Av(132)}, 3/3/5/{\Av(213)}, 3/4/6/{\Av(132)},
    4/4/7/{\Av(213)}, 4/5/8/{\Av(132)}, 5/5/9/{\Av(213)}}
    {\node[cell] (n\k) at (\c,\r) {};
     \node[font=\scriptsize] at ($(\c,\r)+(0,0.10)$) {$\cls$};
     \node[font=\scriptsize\bfseries, anchor=north west]
       at ($(\c,\r)+(-0.40,-0.40)$) {\k};}
  \draw[dom] ($(1,1)+(-0.48,-0.48)$) rectangle ($(1,2)+(0.48,0.48)$);
  \draw[dom] ($(2,3)+(-0.48,-0.48)$) rectangle ($(3,3)+(0.48,0.48)$);
  \draw[dom] ($(4,4)+(-0.48,-0.48)$) rectangle ($(4,5)+(0.48,0.48)$);
  \draw[conn] ($(2,2)+(-0.48,-0.48)$) rectangle ($(2,2)+(0.48,0.48)$);
  \draw[conn] ($(3,4)+(-0.48,-0.48)$) rectangle ($(3,4)+(0.48,0.48)$);
  \draw[conn] ($(5,5)+(-0.48,-0.48)$) rectangle ($(5,5)+(0.48,0.48)$);
  \draw[dom]  (5.19,1.43) rectangle (5.35,1.77);
  \draw[dom]  (5.10,2.06) rectangle (5.44,2.22);
  \draw[conn] (5.17,2.60) rectangle (5.37,2.80);
  \node[anchor=west, font=\footnotesize] at (5.55,1.6)  {vertical domino};
  \node[anchor=west, font=\footnotesize] at (5.55,2.14) {horizontal domino};
  \node[anchor=west, font=\footnotesize] at (5.55,2.7)  {connecting cell};
\end{tikzpicture}
\caption{The first nine cells of the descending
$(\Av(213), \Av(132))$ staircase, with the decomposition of
Section~\ref{sec:refined}. Cells $6j+1, 6j+2$ form a vertical domino and
$6j+4, 6j+5$ a horizontal one; cells $6j+3$ and $6j+6$ connect them. Cells
$1$ and $2$ share a column, a $(V)$ pair; cells $2$ and $3$ share a row, an
$(H)$ pair.}
\label{fig:staircase}
\end{figure}

A \emph{domino} is a two-cell gridded permutation in
$\operatorname{Grid}^{\#}\bigl(\begin{smallmatrix}\Av(213)\\ \Av(132)\end{smallmatrix}\bigr)$
avoiding $1324$. BBEP enumerate them: the number of $n$-point dominoes is
$2(3n+3)!/((n+2)!(2n+3)!)$, sequence \code{A000139} \cite{oeis}, of growth rate
$27/4$ \cite[Thm.~2]{bbep}, and balanced dominoes, those with the same number of
points in each cell, have the same growth rate \cite[Prop.~7]{bbep}.

Two facts about occurrences are used throughout. The first is a locality
statement which BBEP give as an observation.

\begin{lemma}[locality]\label{lem:L}
In any staircase-gridded permutation the four points of an occurrence of $1324$
lie in two adjacent cells, two points in each.
\end{lemma}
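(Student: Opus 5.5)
We argue from the geometry of the staircase, using only the separation properties recorded above and the classes of the cells. Write the occurrence as points $p_1,p_2,p_3,p_4$ in left-to-right order, with values in relative order $1,3,2,4$, and let $c_i$ be the cell index of $p_i$. The first step is to use the fact that cells at distance at least two are separated in both coordinates. In the descending staircase, if $c(x)\le c(y)-2$ then $x$ lies strictly to the left of and strictly above $y$. Hence any \emph{increasing} pair of points (one strictly south-west of the other) lies in cells at distance at most one. In $1324$ every pair except $\{p_2,p_3\}$ is increasing. So $|c_i-c_j|\le 1$ for all pairs other than $\{2,3\}$.

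The second step rules out $|c_2-c_3|=2$. If it held, both $c_2$ and $c_3$ would lie within one of $c_1$ and of $c_4$, which forces $c_1=c_4=c$ and $\{c_2,c_3\}=\{c-1,c+1\}$. Since $p_2$ is north-west of $p_3$, we must have $c_2=c-1$ and $c_3=c+1$. One of the pairs $(c-1,c)$ and $(c,c+1)$ is a $(V)$ pair and the other an $(H)$ pair.
\begin{itemize}
\item If $(c-1,c)$ is $(V)$, every point of cell $c-1$ lies above every point of cell $c$. This contradicts $p_4>p_2$ in value.
\item If $(c-1,c)$ is $(H)$, cell $c-1$ lies entirely to the left of cell $c$. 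This contradicts $p_1$ preceding $p_2$.
\end{itemize}
Hence all four points lie in one adjacent pair of cells $\{c,c+1\}$.

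The remaining step, which is the only real case analysis, shows the split is two and two. All four points cannot lie in one cell, because $1324$ contains both $213$ (as $324$) and $132$ (as $132$), and each cell avoids one of these. Suppose three lie in one cell and the singleton in the neighbouring cell; there are two cases.
\begin{itemize}
\item If the singleton is $p_2$ or $p_3$, the triple is order isomorphic to $123$. The singleton then lies strictly between two points of the triple in position \emph{and} strictly between two of them in value. But adjacent cells are separated in one coordinate, either column-wise ($(H)$) or row-wise ($(V)$), so this is impossible.
\item If the singleton is $p_1$, the triple $p_2p_3p_4$ is a $213$. It must therefore sit in an even cell $c$, an $\Av(132)$ cell, and $p_1$ lies south-west of all three points. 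The neighbour $c+1$ forms an $(H)$ pair with $c$ and lies to the right, so it cannot contain $p_1$. The neighbour $c-1$ forms a $(V)$ pair with $c$ and lies above, so it cannot contain $p_1$ either.
\item Symmetrically, if the singleton is $p_4$, the triple $p_1p_2p_3$ is a $132$ in an odd $\Av(213)$ cell. Then $p_4$ is north-east of the triple. The neighbour $c-1$ lies entirely to the left ($(H)$ pair), and the neighbour $c+1$ lies entirely below ($(V)$ pair), so neither can contain $p_4$.
\end{itemize}
This leaves exactly two points in each of two adjacent cells.

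I expect no step to be a real obstacle. The only care needed is bookkeeping: matching correctly which neighbour of an odd or even cell is the $(V)$ partner and which the $(H)$ partner, and remembering that points in $(V)$ and $(H)$ pairs interleave freely in one coordinate. Because of that interleaving, the argument must only ever use separation in the other coordinate.
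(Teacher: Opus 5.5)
Your proof is correct and follows the paper's own argument: the same observation that every ascent pair of $1324$ lies in cells at distance at most one, together with the same $(V)$/$(H)$ contradiction at $c_2=c-1$, $c_1=c_4=c$, $c_3=c+1$, confines the occurrence to two adjacent cells. Only the three-one split is organised differently, and both versions are complete: you case on which pattern point is the singleton (a sandwich argument for $p_2,p_3$, a parity-and-neighbours check for $p_1,p_4$), whereas the paper cases on the pair type, uses value separation in a $(V)$ pair to force the singleton to be $P_1$ or $P_4$, and obtains the $(H)$ case by anti-diagonal reflection.
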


\begin{proof}
Write the occurrence $P_1 P_2 P_3 P_4$ in position order with values
$v_1 < v_3 < v_2 < v_4$ and let $c_t$ be the cell of $P_t$. Of the six pairs
exactly one, $(P_2, P_3)$, is an inversion. Two points in cells at distance at
least two form an inversion with the point in the lower-indexed cell earlier and
larger, so every one of the five ascent pairs has its cells at distance at most
one; hence $|c_1 - c_4| \le 1$ and each of $c_2, c_3$ lies within one of both.

Suppose $\max_t c_t - \min_t c_t \ge 2$. The only pair permitted at distance two
is $(c_2, c_3)$, so the extremes are attained there, $c_1 = c_4$ is the unique
index within one of both, and $|c_2 - c_3| = 2$; the separation condition applied
to the inversion puts $P_2$ in the lower-indexed cell, so $c_2 = t$,
$c_1 = c_4 = t+1$ and $c_3 = t+2$. For $t$ odd, cells $t$ and $t+1$ form a
$(V)$ pair, so $v_2 > v_4$, against $v_2 < v_4$; for $t$ even they form an
$(H)$ pair, so $P_2$ precedes $P_1$, against the position order. Neither
parity is possible.

All four cells therefore lie in a window of width one. A single cell is
impossible, since $1324$ contains $132$ on its first three entries and $213$ on
its last three while every cell avoids one of the two. In a $(V)$ pair a split
with one point below forces that point to be $P_1$, leaving $P_2P_3P_4$ above in
the pattern $213$; a split with one point above forces $P_4$, leaving
$P_1P_2P_3$ below in the pattern $132$. Case $(H)$ follows by reflection in the
anti-diagonal, which maps the staircase to itself, exchanges $\Av(213)$ and
$\Av(132)$, and fixes $1324$.
\end{proof}

The second is the criterion for a domino, immediate from the case analysis above.

\begin{lemma}[domino criterion]\label{lem:crit}
Split a word by value at a threshold. If the low cell avoids $132$ and the high
cell avoids $213$, then an occurrence of $1324$ is exactly an ascent of the low
cell interleaved in position with an ascent of the high cell.
\end{lemma}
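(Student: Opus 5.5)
The plan is to prove the statement directly from the case analysis in the proof of Lemma~\ref{lem:L}, specialised to a single $(V)$ pair. The word is split by value into a low cell $L$, which avoids $132$, and a high cell $U$, which avoids $213$. The two directions are handled separately: first, that any occurrence of $1324$ has exactly the described shape; second, that any such interleaved pair of ascents produces an occurrence.

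For the first direction, take an occurrence $P_1P_2P_3P_4$ in position order with values $v_1<v_3<v_2<v_4$. I will count how many of its points lie in $L$.
\begin{itemize}
\item All four in one cell is impossible: $1324$ contains $132$ on its first three entries, and $L$ avoids $132$; it contains $213$ on its last three, and $U$ avoids $213$.
\item Exactly one point in $L$ forces that point to have the smallest value, namely $P_1$. Then $P_2P_3P_4$ would lie in $U$ and form the pattern $213$, which is impossible.
\item Exactly one point in $U$ forces it to be $P_4$. Then $P_1P_2P_3$ would form $132$ in $L$, which is impossible.
\end{itemize}
So the split is two and two. Since the cells are separated by a value threshold, $L$ holds the two smallest values, which are $v_1$ and $v_3$, so $L=\{P_1,P_3\}$ and $U=\{P_2,P_4\}$. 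Both pairs are ascents: $P_1$ precedes $P_3$ with $v_1<v_3$, and $P_2$ precedes $P_4$ with $v_2<v_4$. The positions alternate $L,U,L,U$, which is precisely the statement that the low ascent interleaves in position with the high ascent.

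For the converse, take an ascent $a<b$ (positions $p_a<p_b$) in $L$ and an ascent $c<d$ (positions $p_c<p_d$) in $U$, interleaved in position. I read ``interleaved'' as $p_a<p_c<p_b<p_d$, the low ascent starting first. Then every value in $L$ is below every value in $U$, so $a<b<c<d$. Reading the four points in position order gives values $a,c,b,d$, whose pattern is $1324$.

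The only step needing care is pinning down what ``interleaved'' means. The other alternation, $p_c<p_a<p_d<p_b$, gives values $c,a,d,b$, which is the pattern $3142$ and not $1324$. The statement should therefore be read with the low ascent beginning first, and this is exactly the orientation the forward direction forces. Apart from that, everything is a direct reuse of the split analysis in Lemma~\ref{lem:L}.
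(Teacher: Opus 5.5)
Your proof is correct and is the argument the paper has in mind. The paper gives no separate proof, calling the criterion immediate from the $(V)$-pair case analysis in the proof of Lemma~\ref{lem:L}, which is exactly your one-point-below/one-point-above elimination. Your explicit converse and your remark that the low ascent must come first (the other alternation gives $3142$) spell out what the paper leaves implicit, and match how the criterion is used in the proof of Lemma~\ref{lem:valid}.
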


Finally we isolate the property of skew decompositions that the interleaving
rules use. For any permutation $w = c_1 \ominus \cdots \ominus c_r$ the
components occupy consecutive position and value intervals with earlier
components holding larger values, so a point of $c_i$ and a later point of $c_j$
with $i < j$ form a descent, and
\begin{equation}\label{eq:A}
  \text{every ascent of } w \text{ lies inside a single skew component.}
\end{equation}
This holds for every permutation; we use it for the connecting cells, which lie
in $\Av(132)$ or $\Av(213)$.

\subsection{Leaves, strips, and the interleaving count}\label{sec:strips}

Leaves and strips are defined for all four kinds of domino cell at once. A
\emph{leaf} of a cell is a right-to-left maximum if the cell avoids $213$ and a
left-to-right minimum if it avoids $132$, which covers the four: a vertical
domino occupies a column, its upper cell avoiding $213$ and its lower avoiding
$132$, and a horizontal domino occupies a row, its left cell avoiding $132$ and
its right cell avoiding $213$. The $r$ non-leaves of a cell cut it into $r+1$
\emph{strips}, along the axis on which the cell meets the connecting cell beside
it: a cell of a vertical domino shares a row with that cell and interleaves with
it in value, so its strips are horizontal and are cut by value; a cell of a
horizontal domino shares a column and interleaves in position, so its strips are
vertical and are cut by position. Lemma~\ref{lem:reflect} identifies the two
readings, so everything below may be and is written in the vertical case. A
strip with $j$ leaves is a \emph{$j$-leaf strip}, and one with none is
\emph{empty}; the \emph{strip profile} of a cell records how many strips it has
of each kind. Figure~\ref{fig:strips} is an example.

\begin{figure}[t]
\centering
\begin{tikzpicture}[x=7.5mm, y=7.5mm]
  \foreach \y in {1,3,4} \draw[strip] (0.3,\y) -- (6.5,\y);
  \draw[->] (0.3,0.3) -- (6.0,0.3) node[below, font=\scriptsize] {position};
  \draw[->] (0.3,0.3) -- (0.3,5.8) node[above, font=\scriptsize] {value};
  \node[pt]   at (1,3) {}; \node[pt]   at (2,4) {};
  \node[leaf] at (3,5) {}; \node[pt]   at (4,1) {};
  \node[leaf] at (5,2) {};
  % one rule per strip, spanning it, so each label names a band and not a line
  \draw[gray!55] (6.65,0.35) -- (6.65,0.95);
  \draw[gray!55] (6.65,1.05) -- (6.65,2.95);
  \draw[gray!55] (6.65,3.05) -- (6.65,3.95);
  \draw[gray!55] (6.65,4.05) -- (6.65,5.70);
  \node[font=\scriptsize, anchor=west] at (6.8,0.65) {$0$-leaf};
  \node[font=\scriptsize, anchor=west] at (6.8,2.0)  {$1$-leaf};
  \node[font=\scriptsize, anchor=west] at (6.8,3.5)  {$0$-leaf};
  \node[font=\scriptsize, anchor=west] at (6.8,4.8)  {$1$-leaf};
\end{tikzpicture}
\caption{The $\Av(213)$ cell $34512$. Its right-to-left maxima, the leaves, are
drawn open; the three non-leaves are filled and cut the cell into four
horizontal strips, two empty and two carrying one leaf each, the rules on the
right marking the four. Each dotted line lies at the value of the non-leaf that
draws it. A cell of the lower kind is the same picture reflected in the
anti-diagonal, its leaves the left-to-right minima.}
\label{fig:strips}
\end{figure}

The generating function for a connecting cell counted by points and components is
\begin{equation}\label{eq:H}
  H(z,q) \;=\; \frac{1}{1 - q\Qc(z)} \;=\;
  \frac{2}{2 - q\bigl(1 - \sqrt{1-4z}\bigr)},
  \qquad \Qc(z) \;=\; \tfrac12\bigl(1 - \sqrt{1-4z}\bigr),
\end{equation}
with $\Qc$ the generating function of a single skew indecomposable component.
The generating function for the possibilities in a strip with $j$ leaves is
$H_j(z,q) = \Omega_j[H(z,q)]$, where $\Omega_j$ is the linear operator
\begin{equation}\label{eq:Omega}
  \Omega_j[z^n] \;=\; \binom{n+j}{j} z^n,
  \qquad\text{equivalently}\qquad
  \Omega_j[F(z)] \;=\; \frac{1}{j!}
  \frac{\partial^j}{\partial z^j}\bigl(z^j F(z)\bigr).
\end{equation}
The count factors over the strips. A non-leaf of the domino cell must sit
between two consecutive
components of the connecting cell, so the non-leaves cut the component sequence
into as many consecutive blocks as there are strips, one block per strip and in
the same order; within a strip the leaves are placed freely among the components
of its own block, and nothing links one strip to another. The count is therefore
$\prod_i H_{a_i}(z,q)$ over the strip sizes $a_i$, a product depending on the
multiset $\{a_i\}$ and not on the order in which the strips appear, so the
exponent may be written in terms of the profile alone.

BBEP's Proposition 18 states that the sequence $H_0, H_1, H_2, \ldots$ is
log-convex in the coefficientwise order. Among all profiles with a given number
of strips and a given number of leaves, an equitable distribution of the leaves
among the strips therefore minimises that product. That is how their Theorem 16
bounds the product without determining the profile, and how we bound it here.
The construction asks of a cell only that its leaves and its empty strips be at
least as many as two thresholds. Lemma~\ref{lem:equitbound} carries the minimum
from a cell's own counts down to those thresholds.

Two densities of BBEP's fix the equitable profile. Their Proposition 10 gives
the leaf density of a domino cell as $5/9$ per point and their Proposition 12
the density of medial empty strips as $5/27$. The following reconciles the
second with the empty-strip count itself.

\begin{remark}[medial and total empty strips]\label{rem:medial}
Call an empty strip \emph{medial} if it is neither the lowest nor the highest of
its cell's strips. The highest strip of a top cell holds the cell's largest
value, which is a right-to-left maximum and so a leaf, so that strip is never
empty; every empty strip but possibly the lowest is therefore medial. The bottom cell is
the same under the $180^\circ$ rotation, with the two ends exchanged, and the
cell $12$ of $\Av(132)$ shows its highest strip really can be empty. Either way
the two counts differ by $0$ or $1$ per cell, which is $O(1)$ against
$\Theta(n)$ points and vanishes from the density. What is reconciled here is the
constant and not a threshold: BBEP's family, Lemma~\ref{lem:equitbound} and
Proposition~\ref{prop:transfer} are all stated in empty strips outright, and no
count is converted from one reading to the other.
\end{remark}

\noindent So the $5/27$ of BBEP's Proposition 12 may be read as a density of
empty strips outright, which is the form the threshold $\beta < 5/27$ takes
below.

Fix densities $\alpha$ and $\beta$ for the leaves and the empty strips. Since
$r$ non-leaves give $r+1$ strips, a cell on $m$ points with $\alpha m$ leaves
carries $(1-\alpha)m$ strips, of which $\beta m$ are empty. BBEP show
\cite[\S7.2]{bbep} that for $\alpha \in [11/20, 5/9)$ and
$\beta \in [7/40, 5/27)$ and $m \ge 32$ the equitable distribution of the leaves
among the non-empty strips uses only strips with two and with three leaves, in
numbers
\begin{equation}\label{eq:equit}
  e_0(m) = \lceil\beta m\rceil + 1, \quad
  e_2(m) = 3m - 4\lceil\alpha m\rceil - 3\lceil\beta m\rceil, \quad
  e_3(m) = 3\lceil\alpha m\rceil + 2\lceil\beta m\rceil - 2m ,
\end{equation}
the solutions of $2e_2 + 3e_3 = \lceil\alpha m\rceil$ and
$e_0 + e_2 + e_3 = m - \lceil\alpha m\rceil + 1$.

All four endpoints are BBEP's, and the two lower ones are where the equitable
distribution changes form. The average number of leaves per non-empty strip is
$\alpha/(1-\alpha-\beta)$, at least $2$ exactly when $3\alpha + 2\beta \ge 2$
and at most $3$ exactly when $4\alpha + 3\beta \le 3$; on the rectangle the
first holds with equality only at the corner $(11/20,\, 7/40)$ and the second
throughout, since $4\alpha + 3\beta < 25/9$ there. So the equitable
distribution uses parts of size two and three and no others. Per point of the cell the
three densities are
\begin{equation}\label{eq:fprofile}
  f_0 = \beta, \qquad f_2 = 3 - 4\alpha - 3\beta, \qquad
  f_3 = 3\alpha + 2\beta - 2 ,
\end{equation}
which at the limiting values $(\alpha,\beta) = (5/9, 5/27)$ read
$(5/27,\, 2/9,\, 1/27)$. We write $f$ for the profile \eqref{eq:fprofile}
throughout.

\subsection{The refined construction}\label{sec:refined}

BBEP decompose the staircase into an alternating sequence of dominoes and single
\emph{connecting} cells, so that each period contributes exactly two
block-cell-to-connecting-cell adjacencies. Avoiding $1324$ is then guaranteed by
a local interleaving rule. In their Theorem 9 the rule is that every point of a
domino cell lies between two consecutive skew components of the adjacent
connecting cell, which gives $81/8 = 10.125$. In their Theorem 16 the rule is
relaxed for the cells \emph{horizontally} adjacent to a connecting cell, where
only the non-leaves need lie between components. A leaf of such a cell has
nothing to its upper right, so it cannot act as the $2$ of an occurrence with a
$4$ beyond it; in the opposite orientation the mirrored argument bars it from
acting as the $3$. That relaxation gives $10.271012$.

Their exponent is
\begin{equation}\label{eq:phi}
  \Phi(z,\gamma,\kappa) \;=\; (1+\gamma)\log\frac{27z}{4}
  \;-\; \kappa \log q_0 \;+\; \sum_j f_j \log H_j(z,q_0)
  \;+\; \mathcal{E}(\gamma,\kappa),
\end{equation}
in which $\mathcal{E}$ is the exponential growth rate of the binomial
coefficient that counts the unrelaxed interleavings,
\begin{equation}\label{eq:ent}
  \mathcal{E}(\gamma,\kappa) \;=\;
  (\gamma+\kappa)\log(\gamma+\kappa) - \gamma\log\gamma - \kappa\log\kappa
  \;=\; \lim_{m\to\infty}\frac1m\log\binom{\gamma m+\kappa m}{\kappa m},
\end{equation}
$\gamma$ is the ratio of the two cell sizes, $\kappa$ the ratio of a connecting
cell's component count to a cell size, and $q_0$ the saddle solving
$\sum_j f_j\,q\,(\partial_q H_j)/H_j = \kappa$; the bound is $\gr \ge 1/z^*$ at
the root of $\Phi = 0$.

Their $\gamma$ and $\kappa$ can be eliminated in closed form: stationarity in
\eqref{eq:phi} reads $\gamma/(\gamma+\kappa) = 27z/4$ and
$\kappa/(\gamma+\kappa) = 1/q_0$, so that $q_0 = 4/(4-27z)$. That collapse uses
the binomial term of \eqref{eq:phi} and holds for that exponent only;
Section~\ref{sec:exponent} replaces the term, and
Section~\ref{sec:controls} checks the replacement against their published
numbers.

\section{The second relaxation and its count}\label{sec:relax}

Section~\ref{sec:valid} proves the second relaxation valid and closes the whole
decomposition under both. Section~\ref{sec:harris} is the count.

\subsection{Validity}\label{sec:valid}

\begin{lemma}\label{lem:valid}
Let a $(V)$ pair consist of a connecting cell and a domino cell sharing a column.
If every non-leaf of the domino cell lies between two consecutive skew components
of the connecting cell, then the pair contains no occurrence of $1324$. Leaves
may be placed arbitrarily.
\end{lemma}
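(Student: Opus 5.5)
The plan is to reduce any occurrence to the two-cell picture of Lemma~\ref{lem:crit}. Then I would use \eqref{eq:A} to trap one of the domino cell's points inside the position span of a single skew component of the connecting cell. Finally I would show that such a trapped point can be neither a non-leaf (by hypothesis) nor a leaf (by the extremal property that defines leaves).

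First I would fix the geometry. In a $(V)$ pair the two cells share a column, so they interleave in position, and the odd-indexed cell holds the larger values. By Lemma~\ref{lem:L} any occurrence $P_1P_2P_3P_4$ of $1324$ inside the pair has two points in each cell. The only splits compatible with the proof of Lemma~\ref{lem:L} are $P_1,P_3$ in the lower cell and $P_2,P_4$ in the upper cell. The upper cell avoids $213$ and the lower avoids $132$, so Lemma~\ref{lem:crit} applies. An occurrence is therefore exactly a low ascent $(P_1,P_3)$ and a high ascent $(P_2,P_4)$ whose positions alternate: $P_1<P_2<P_3<P_4$. Which cell is the connecting one depends on the parity of the connecting cell. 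Cell $6j+3$ is an $\Av(213)$ connecting cell sitting above the $\Av(132)$ cell $6j+4$ of a horizontal domino. Cell $6j+6$ is an $\Av(132)$ connecting cell sitting below the $\Av(213)$ cell $6j+7$. I would treat these two cases separately.

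\emph{Connecting cell on top.} By \eqref{eq:A} the ascent $(P_2,P_4)$ lies in one skew component $C$. Since $P_2<P_3<P_4$ in position, $P_3$ lies strictly inside the position interval occupied by $C$. Hence $P_3$ is not between two consecutive components, so by hypothesis it is a leaf of the domino cell. In an $\Av(132)$ cell a leaf is a left-to-right minimum. But $P_1$ precedes $P_3$ and is smaller, a contradiction.

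\emph{Connecting cell below.} Symmetrically, $(P_1,P_3)$ lies in one component of the connecting cell, and $P_2$ lies strictly inside its position span. So $P_2$ is a leaf of the $\Av(213)$ domino cell, that is, a right-to-left maximum. But $P_4$ follows it and is larger, again a contradiction.

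The step needing most care is the bookkeeping rather than any estimate. First, "between two consecutive skew components" must be read in the coordinate along which the pair interleaves, namely position for a $(V)$ pair. Second, the upper/lower roles must be matched correctly to the leaf type in each parity. Third, I must confirm that the two-and-two split forced by Lemma~\ref{lem:L} really is low $=\{P_1,P_3\}$, high $=\{P_2,P_4\}$. The $1$--$3$ split was already excluded in its proof, and a $2$--$2$ split with $P_1,P_2$ low would violate $v_2>v_3$. Once those points are pinned down, the argument is the mirror, in the other axis, of BBEP's one-directional relaxation.
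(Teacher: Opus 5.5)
Your argument is correct and is essentially the paper's proof. It uses the same reduction through Lemmas~\ref{lem:L} and~\ref{lem:crit} to $P_1<P_2<P_3<P_4$ with $\{P_1,P_3\}$ low, the same use of \eqref{eq:A} to trap $P_3$ (connecting cell on top) or $P_2$ (connecting cell below) inside one component, and the same leaf-extremality observation, which the paper states directly (the trapped point is a non-leaf) where you argue by contradiction. One harmless index slip: the $(V)$ partner of the connecting cell $6j+6$ is the $\Av(213)$ cell $6j+5$ above it, not $6j+7$, which shares a row with $6j+6$ and forms an $(H)$ pair. The lemma concerns an abstract $(V)$ pair and $6j+5$ also avoids $213$, so your argument is unaffected.
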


\begin{proof}
By Lemma~\ref{lem:L} an occurrence in the pair splits two and two, and by
Lemma~\ref{lem:crit} it is an ascent of the lower cell interleaved in position
with an ascent of the upper cell:
\[
  P_1 < P_2 < P_3 < P_4 \text{ in position}, \qquad v_1 < v_3 < v_2 < v_4,
\]
with $P_1, P_3$ in the lower cell and $P_2, P_4$ in the upper.

Suppose the connecting cell is the upper one. Then $(P_2, P_4)$ is one of its
ascents, so by \eqref{eq:A} both points lie in a single skew component, and $P_3$
sits positionally strictly between them, hence inside that component. Now $P_1$
precedes $P_3$ and is smaller, so $P_3$ is not a left-to-right minimum of the
lower cell: it is a non-leaf.

Suppose instead the connecting cell is the lower one. Then $(P_1, P_3)$ is its
ascent, both points lie in one component by \eqref{eq:A}, and $P_2$ sits
positionally inside that component. Now $P_4$ follows $P_2$ and is larger, so
$P_2$ is not a right-to-left maximum of the upper cell: again a non-leaf.

In either case the point driven inside a component is a non-leaf of the domino
cell, so confining the non-leaves to the gaps forbids the occurrence.
\end{proof}

\begin{remark}\label{rem:sharp}
The condition ``no non-leaf is straddled by an ascent of the connecting cell''
looks weaker than confining the non-leaves to the gaps between components, but
the two are equivalent: skew indecomposability says exactly that for every
interior slot $t$ there are $i < t \le j$ with $\pi_i < \pi_j$, an ascent
straddling $t$. The rule is nonetheless weaker than the exact condition, which
asks of a straddled non-leaf that it \emph{also} have an earlier smaller point
of its own cell. The gap rule is that first half alone, and what the second
half would buy is open.
\end{remark}

Lemma~\ref{lem:valid} concerns one adjacent pair. In the decomposition the two
relaxations meet on a single connecting cell, which sits in an $(H)$ adjacency
on one axis and a $(V)$ adjacency on the other. They do not interact.

\begin{corollary}\label{cor:closure}
Take the decomposition of Section~\ref{sec:refined} and require of every cell of
a domino that its non-leaves lie between consecutive skew components of the
adjacent connecting cell, leaving its leaves free. Then the gridded permutation
avoids $1324$.
\end{corollary}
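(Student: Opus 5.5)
The plan is to localise with Lemma~\ref{lem:L} and then dispose of every adjacent pair of cells by one of two lemmas. By Lemma~\ref{lem:L}, any occurrence of $1324$ in the gridded permutation has its four points in two adjacent cells $k, k+1$ of the staircase, two in each. It therefore suffices to show that no adjacent pair of cells contains an occurrence. In the decomposition of Section~\ref{sec:refined} the cells are grouped with period six: cells $6j+1,6j+2$ form a vertical domino, $6j+3$ is a connecting cell, $6j+4,6j+5$ form a horizontal domino, and $6j+6$ is a connecting cell. So each adjacent pair is one of two kinds: the two cells of a single domino, or a domino cell together with a connecting cell.

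\emph{Pairs inside a domino.} A pair inside a domino is avoided by the definition of a domino, which is a $1324$-avoiding gridded permutation. I will check that the construction places a genuine domino in cells $6j+1,6j+2$ and in $6j+4,6j+5$. By the same two-and-two splitting, no occurrence can use both a domino cell and a cell outside the pair without using an adjacent pair, so nothing further is needed here.

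\emph{Pairs with a connecting cell.} I will list them by parity. Cell $6j+2$ with $6j+3$ is an $(H)$ pair. Cell $6j+3$ with $6j+4$ is a $(V)$ pair, since $6j+3$ is odd. Cell $6j+5$ with $6j+6$ is a $(V)$ pair. Cell $6j+6$ with $6j+7$ is an $(H)$ pair. Each connecting cell therefore meets one domino in an $(H)$ adjacency and the other in a $(V)$ adjacency. For the $(V)$ pairs, Lemma~\ref{lem:valid} applies verbatim, with the connecting cell as the upper cell in the pair $(6j+3,6j+4)$ and as the lower cell in $(6j+5,6j+6)$; both cases are covered in its proof. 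For the $(H)$ pairs, I apply reflection in the anti-diagonal, as in the last step of the proof of Lemma~\ref{lem:L}. This reflection maps the staircase to itself, turns $(H)$ pairs into $(V)$ pairs, and exchanges $\Av(213)$ with $\Av(132)$. It sends right-to-left maxima to left-to-right minima, so leaves go to leaves, and it sends skew components to skew components, so the gap condition is preserved. It also fixes $1324$. Lemma~\ref{lem:valid} then transfers to the $(H)$ pairs; this is BBEP's original relaxation.

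\emph{Main obstacle.} The only real point is that the conditions at the two ends of a connecting cell do not interact. The hypothesis of Lemma~\ref{lem:valid} is a property of the pair alone: it concerns the positions (or values) of the domino cell's non-leaves relative to the skew components of the connecting cell. That decomposition is intrinsic to the connecting cell and is the same from either side. The occurrences of the lemma also involve only the two cells of the pair. So imposing the rule on both neighbours at once is consistent, and each pair is handled independently.

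I will also verify that "leaf" is computed within the domino cell itself, as in the proof of Lemma~\ref{lem:valid}, rather than within the union. Once that is checked, Lemma~\ref{lem:L} rules out every occurrence, and the gridded permutation avoids $1324$.
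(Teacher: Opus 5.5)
Your proof is correct and takes the paper's approach: localise with Lemma~\ref{lem:L}, then close the three kinds of adjacent pair. Pairs inside a domino avoid $1324$ by definition, $(V)$ pairs are handled by Lemma~\ref{lem:valid}, and $(H)$ pairs by the one-directional relaxation; your period-six listing also shows that no two cells of distinct dominoes are adjacent. The one difference is that you obtain the $(H)$ case by reflecting Lemma~\ref{lem:valid} in the anti-diagonal, using the facts recorded in Lemma~\ref{lem:reflect}, whereas the paper cites BBEP's rule for it. Both routes are sound.
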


\begin{proof}
By Lemma~\ref{lem:L} the four points of an occurrence lie in two adjacent cells,
two in each, so an occurrence is confined to some pair $\{k, k+1\}$ and none
draws points from both neighbours of a connecting cell. In the decomposition
every such pair is of one of three kinds. The two cells of a common domino form
a domino, which avoids $1324$ by definition. A cell of a vertical domino
together with the connecting cell beside it forms an $(H)$ pair, closed by
BBEP's rule \cite[\S7]{bbep}, whose leaves are free there. A cell of a
horizontal domino together with the connecting cell above or below it forms a
$(V)$ pair, closed by Lemma~\ref{lem:valid}, whose leaves are free by that
lemma. No two cells of distinct dominoes are adjacent, since the dominoes occupy
the cells $6j+1, 6j+2$ and $6j+4, 6j+5$ and the connecting cells the remaining
ones, so the list is complete.
\end{proof}

\subsection{The Harris separation}\label{sec:harris}

Relaxing both directions destroys the factorisation that makes BBEP's count
tractable. For a fixed connecting cell $C$ the two interleavings are
independent, since the horizontal neighbour meets $C$ in value while the
vertical one meets it in position, so the count is
$\sum_C N_{\mathrm{h}}(C; D_{\mathrm{h}})\,N_{\mathrm{v}}(C; D_{\mathrm{v}})$
for the two domino cells $D_{\mathrm{h}}$ and $D_{\mathrm{v}}$ that meet $C$.
BBEP can factor this because their unrelaxed
$N_{\mathrm{v}}(C; D_{\mathrm{v}}) = \binom{m+c}{c}$ depends on $C$ only through
its number of components $c$; once relaxed it does not, so the sum no longer
splits.

The obstruction is removed by a correlation inequality: if $X_1, \ldots, X_c$
are independent and $F$ and $G$ are both increasing functions of
$(X_1, \ldots, X_c)$, then $\mathbb{E}[FG] \ge \mathbb{E}[F]\,\mathbb{E}[G]$.
Harris proves this for Bernoulli coordinates \cite{harris}, and Grimmett
\cite[Thm.~2.4]{grimmett} gives it in the form used here, for a product measure
on a product of totally ordered spaces; it is the product-measure case of
Fortuin, Kasteleyn and Ginibre. In one variable it is Chebyshev's association
inequality, and it follows for $c$ of them by conditioning and inducting.

\begin{definition}\label{def:placement}
Let $C$ be a connecting cell with skew components of sizes
$\mathbf{w} = (w_1, \ldots, w_c)$, each $w_i \ge 1$, and $|C| = \sum_i w_i$
points, and let $D$ be a domino cell with a distinguished set $S$ of its points.
A \emph{placement} of $D$ against $C$ is a weakly increasing map from the points
of $D$ to $\{0, 1, \ldots, |C|\}$, counting for each point how many points of
$C$ precede it. The points of $D$ are taken in the order along which the two
cells interleave: by position for the vertical neighbour of
Lemma~\ref{lem:valid}, which shares a column with $C$, and by value for the
horizontal one, which shares a row. Either is a total order on the points of
$D$, along which \emph{weakly increasing} is meant. A placement is
\emph{admissible} when every point of $S$ is sent into
\[
  G(\mathbf{w}) \;=\; \{0,\; \sigma_1,\; \sigma_2,\; \ldots,\; \sigma_c\},
  \qquad \sigma_i = w_1 + \cdots + w_i ,
\]
the positions lying between consecutive components together with the two
exterior positions, and $N(C;D)$ is the number of admissible placements, the
set $S$ being understood. Taking $S$ to be the non-leaves of $D$ is
the rule of Lemma~\ref{lem:valid} and of BBEP's Section 7.1 written out. The
rule for the horizontal neighbour differs only in the axis along which the
components are read, and everything below applies to it verbatim.

The $|S|$ points of $S$ cut $D$ into $|S|+1$ \emph{strips} along that same
axis, exactly as its non-leaves do in Section~\ref{sec:strips}, which is the
case $S = \{\text{non-leaves of } D\}$; and the factorisation
\[
  \sum_{C} z^{|C|} q^{c(C)} N(C;D) \;=\; \prod_i H_{a_i}(z,q),
\]
over the strip sizes $a_i$, the sum running over all connecting cells $C$ with
$c(C)$ components, holds for every $S$. Its proof in Section~\ref{sec:strips}
uses only that the points of $S$ are confined to $G(\mathbf{w})$ and the rest
are free. Only $S = \{\text{non-leaves}\}$ occurs outside
Lemma~\ref{lem:equitbound}.
\end{definition}

\begin{lemma}\label{lem:mono}
$N(C;D)$ depends on $C$ only through $\mathbf{w}$, so we may write
$N(\mathbf{w};D)$ for it; and $N(\mathbf{w};D) \le N(\mathbf{w}';D)$ whenever
$\mathbf{w} \le \mathbf{w}'$ coordinatewise.
\end{lemma}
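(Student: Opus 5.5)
The first claim is immediate from Definition~\ref{def:placement}. A placement is a weakly increasing map from the totally ordered points of $D$ into $\{0,1,\ldots,|C|\}$, and admissibility asks only that the points of $S$ land in $G(\mathbf{w})$. The range $\{0,\ldots,|C|\}$ is determined by $|C| = \sum_i w_i$, and $G(\mathbf{w})$ is determined by the partial sums $\sigma_i$. So the set of admissible placements, and with it $N(C;D)$, depends on $C$ only through $\mathbf{w}$. The internal structure of the components never enters.

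For monotonicity I would reduce to raising a single coordinate by one, $w_k \mapsto w_k+1$; the general case $\mathbf{w}\le\mathbf{w}'$ then follows by chaining such steps. Fix $\mathbf{w}$ and $k$, write $\mathbf{w}'$ for the raised vector, and set $s = \sigma_{k-1}$, the slot just before component $k$. I would define an injection $\phi$ from admissible placements for $\mathbf{w}$ to admissible placements for $\mathbf{w}'$ by post-composing with the map
\[
  \iota(x) \;=\; \begin{cases} x, & x \le s,\\ x+1, & x > s. \end{cases}
\]
This $\iota$ is strictly increasing from $\{0,\ldots,|C|\}$ into $\{0,\ldots,|C|+1\}$. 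So $\phi(p) = \iota\circ p$ is again weakly increasing, and $\phi$ is injective.

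It remains to check admissibility of $\iota\circ p$: $\iota$ must carry $G(\mathbf{w})$ into $G(\mathbf{w}')$. The slot $0$ goes to $0$, and each $\sigma_i$ with $i<k$ satisfies $\sigma_i\le s$, so it is fixed, and indeed $\sigma'_i=\sigma_i$. For $i\ge k$ we have $\sigma_i > s$, because $w_k\ge1$. Hence $\iota(\sigma_i)=\sigma_i+1=\sigma'_i$. Every point of $S$ therefore still lands in a gap slot, and $N(\mathbf{w};D)\le N(\mathbf{w}';D)$.

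I expect no real obstacle. The one point that needs care is choosing the insertion slot so that the gap positions shift exactly as the new partial sums do. Inserting the new point at the start of component $k$, at slot $s$, uses $w_k\ge1$ to keep $\sigma_k$ strictly beyond $s$. If there were trouble it would come from a degenerate $w_k=0$, which the definition excludes. The argument is the same whichever axis orders $D$, so it covers the horizontal neighbour as well.
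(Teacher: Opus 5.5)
Your proposal is correct and follows the paper's proof essentially line for line. You observe that only $|C|$ and $G(\mathbf{w})$ enter the definition, and you compose with the same strictly increasing shift that fixes slots up to $\sigma_{k-1}$ and raises the rest by one, using $w_k \ge 1$ to get $\iota(G(\mathbf{w})) \subseteq G(\mathbf{w}')$. That inclusion is all the injection needs, even though the paper states equality.
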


\begin{proof}
Only $G(\mathbf{w})$ enters Definition~\ref{def:placement}, and $G(\mathbf{w})$
is determined by $\mathbf{w}$, so the internal arrangement of each component is
free and the count is a function of $\mathbf{w}$ alone.

For the inequality it is enough to raise one coordinate by one and iterate. Let
$\mathbf{w}'$ agree with $\mathbf{w}$ except that $w'_j = w_j + 1$, and let
$\psi$ on $\{0, \ldots, |C|\}$ fix $x \le \sigma_{j-1}$ and send $x$ to $x+1$
beyond. Then $\psi$ is strictly increasing and
$\psi(G(\mathbf{w})) = G(\mathbf{w}')$ exactly: it fixes $0$ and each
$\sigma_i$ with $i \le j-1$, and raises by one each $\sigma_i$ with $i \ge j$,
those exceeding $\sigma_{j-1}$ because $w_j \ge 1$, which is what passing to
$\mathbf{w}'$ does to them. Composing with $\psi$ therefore carries an
admissible placement against $\mathbf{w}$ to one against $\mathbf{w}'$, weak
increase and the non-leaf condition both being preserved, and injectively,
since $\psi$ is injective.
\end{proof}

The two neighbours read the components of $C$ in opposite orders, the horizontal
one in value and the vertical one in position, so the two counts are functions
of the same vector read forwards and backwards. Reversing the arguments of a
function non-decreasing in each leaves it so, and the law below is a product
measure, invariant under the reversal; no rearrangement is needed.

\begin{theorem}\label{thm:harris}
Fix $z \in (0,\tfrac14)$ and two domino cells with their non-leaves
distinguished, $D_{\mathrm{h}}$ meeting the connecting cell in value and
$D_{\mathrm{v}}$ meeting it in position. Then, for every $c \ge 1$, the sums
running over the connecting cells with $c$ components,
\[
  \sum_C z^{|C|}N_{\mathrm{h}}(C; D_{\mathrm{h}})\,
  N_{\mathrm{v}}(C; D_{\mathrm{v}}) \;\ge\;
  \frac{\bigl(\sum_C z^{|C|}N_{\mathrm{h}}(C; D_{\mathrm{h}})\bigr)
        \bigl(\sum_C z^{|C|}N_{\mathrm{v}}(C; D_{\mathrm{v}})\bigr)}
  {\Qc(z)^{\,c}} .
\]
\end{theorem}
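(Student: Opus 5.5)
We rewrite the sums over connecting cells with exactly $c$ components as expectations under a product measure, and then apply Harris' inequality together with Lemma~\ref{lem:mono}. A connecting cell with $c$ components is a sequence $(C_1,\ldots,C_c)$ of skew indecomposable components, each in the class counted by $\Qc$. The weight $z^{|C|}=\prod_i z^{|C_i|}$ factorises over them. For fixed $z\in(0,\tfrac14)$, $\Qc(z)=\sum_{K} z^{|K|}$ is finite and positive, the sum running over single components $K$. So we can take $C_1,\ldots,C_c$ i.i.d., each drawn with probability $z^{|K|}/\Qc(z)$. Under this law, for any function $F$ of the cell,
\[
  \sum_C z^{|C|}F(C) \;=\; \Qc(z)^{c}\,\mathbb{E}[F].
\]

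By Lemma~\ref{lem:mono} (applied to both neighbours, as Definition~\ref{def:placement} allows), $N_{\mathrm{v}}(C;D_{\mathrm{v}})$ is a function of $\mathbf{w}=(w_1,\ldots,w_c)$, with $w_i=|C_i|$. It is non-decreasing in each coordinate. The horizontal count reads the components in the other order: it is $N(\overleftarrow{\mathbf{w}};D_{\mathrm{h}})$ with $\overleftarrow{\mathbf{w}}=(w_c,\ldots,w_1)$. It is therefore also a coordinatewise non-decreasing function of $\mathbf{w}$. The sizes $w_i=|C_i|$ are independent integer-valued random variables. Both counts are non-negative and finite for fixed $D$, since a placement is a weakly increasing map into $\{0,\ldots,|C|\}$, so at most $\binom{|C|+|D|}{|D|}$ of them exist. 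Their expectations are finite because $z<1/4$ lies inside the radius of convergence of $\Qc$ and polynomial growth in $|C|$ is harmless.

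Harris' inequality, in Grimmett's form for product measures on totally ordered spaces, then gives
\[
  \mathbb{E}[N_{\mathrm{h}}N_{\mathrm{v}}]\;\ge\;\mathbb{E}[N_{\mathrm{h}}]\,\mathbb{E}[N_{\mathrm{v}}].
\]
Multiplying through by $\Qc(z)^{c}$ and rewriting each expectation via the displayed identity, the left side becomes $\sum_C z^{|C|}N_{\mathrm{h}}N_{\mathrm{v}}$. The right side becomes the product of the two marginal sums divided by $\Qc(z)^{c}$, which is the statement.

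The main point to check carefully is the legitimacy of that inequality in this setting. The coordinates are unbounded, so I would either cite the version for square-integrable increasing functions, with square-integrability following from the polynomial bound above, or truncate: apply the inequality to $\min(N,M)$, which is still increasing, and let $M\to\infty$ by monotone convergence. One also has to confirm that each count is genuinely a function of the size vector alone. Lemma~\ref{lem:mono} gives this, since $G(\mathbf{w})$ depends only on $\mathbf{w}$. The internal shapes of the components therefore enter only through their sizes, and conditioning on them changes nothing.
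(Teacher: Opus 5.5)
Your proposal is correct and follows essentially the paper's proof. The paper likewise writes each sum as $\Qc(z)^c$ times an expectation over i.i.d.\ component sizes with law $a_kz^k/\Qc(z)$. It gets coordinatewise monotonicity of both counts from Lemma~\ref{lem:mono} together with the forward/backward reading of $\mathbf{w}$, and it handles unboundedness by truncating at $M$ and applying monotone convergence. Integrability comes from a polynomial bound against the geometric tail, after which Harris is applied.

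Sampling whole components rather than their sizes, and bounding the count by $\binom{|C|+|D|}{|D|}$ instead of $(1+X_1+\cdots+X_c)^m$, are only cosmetic differences.
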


\begin{proof}
By Lemma~\ref{lem:mono} the count $N(C;D)$ is a function of
$\mathbf{w} = (|\gamma_1|, \ldots, |\gamma_c|)$ alone, where
$(\gamma_1, \ldots, \gamma_c)$ is the sequence of skew indecomposables making up
$C$. Let $a_k$ be the number of skew indecomposable components on $k$ points, so
that $\sum_{k \ge 1} a_k z^k = \Qc(z)$, put
$\mu_z(k) = a_k z^k/\Qc(z)$ for $k \ge 1$, a probability distribution, and let
$X_1, \ldots, X_c$ be independent with law $\mu_z$. A connecting cell of $c$
components and $N$ points has weight $z^{N}$, and the point count is
unconstrained, so for any $\Psi$ of the component sizes
\[
  \sum_C \Psi(w_1, \ldots, w_c)\,z^{|C|}
  \;=\; \Qc(z)^{c}\; \mathbb{E}\bigl[\Psi(X_1, \ldots, X_c)\bigr],
\]
the sum running over connecting cells with $c$ components; taking $\Psi = 1$
gives $\sum_C z^{|C|} = \Qc(z)^c$. By Lemma~\ref{lem:mono} and the paragraph
before the statement, both $N_{\mathrm{h}}$ and $N_{\mathrm{v}}$ are
non-decreasing functions of $(X_1, \ldots, X_c)$ in the coordinatewise order.

Two things stand between that and the inequality, since the form quoted above
is for bounded increasing functions while these two grow polynomially in the
component sizes. Truncation removes the boundedness: at each $M$ the functions
$N_{\mathrm{h}} \wedge M$ and $N_{\mathrm{v}} \wedge M$ are increasing and
bounded, so the inequality holds for them, and both sides converge as
$M \to \infty$ by monotone convergence, the two factors being non-negative and
increasing. And the expectations are finite: a placement of a cell on $m$
points maps into $\{0, \ldots, |C|\}$, so
$N \le (1 + X_1 + \cdots + X_c)^{m}$, while $\mu_z$ has a geometric tail,
$\sum_k a_kz'^{\,k}$ converging at any $z' \in (z, \tfrac14)$ and giving
$a_kz^k \le \Qc(z')\,(z/z')^{k}$, and a geometric tail carries every moment.
Harris' inequality therefore gives
$\mathbb{E}[N_{\mathrm{h}} N_{\mathrm{v}}] \ge
\mathbb{E}[N_{\mathrm{h}}]\,\mathbb{E}[N_{\mathrm{v}}]$; multiplying by
$\Qc(z)^{2c}$ and reading each expectation back as a weighted sum gives the
stated inequality.
\end{proof}

The product structure requires the total size of the connecting cell to be
unconstrained. In BBEP's scheme each domino cell holds a fixed number of points
while a connecting cell holds a fixed number of skew indecomposable components
and a free number of points \cite[\S7.2]{bbep}. Theorem~\ref{thm:harris} is
therefore applied at fixed $c$, the coefficient of $q^{c}$; the sum over sizes
stays free.

\begin{remark}[values, not coefficients]\label{rem:values}
The variable $z$ is never extracted. Every comparison in which $z$ appears is
between the \emph{values} of two series at one chosen $z_0 \in (0,\tfrac14)$,
not between their coefficients, the bound coming from comparing such a value
against that of the generating function of $\Av(1324)$ as in BBEP's Section
7.3. This matters because Theorem~\ref{thm:harris} compares two sums at a fixed
$z$, and the quotient it produces need not have non-negative coefficients.
Coefficientwise comparisons below are named as such.
\end{remark}

\section{The exponent}\label{sec:exponent}

Write
\begin{equation}\label{eq:Lam}
  \Lam(z, p, \kappa) \;=\; -\kappa \log q \;+\; \sum_j p_j \log H_j(z,q)
  \qquad\text{at the saddle}\quad
  \sum_j p_j \, q\,\frac{\partial_q H_j}{H_j} \;=\; \kappa ,
\end{equation}
so that BBEP's exponent \eqref{eq:phi} is
$\Phi = (1+\gamma)\log(27z/4) + \Lam(z,f,\kappa) + \mathcal{E}(\gamma,\kappa)$.
The one property of $\Lam$ the certificate uses is that its saddle is a minimum,
so that bounding the bracket from below bounds $\Lam$ from below.

\begin{lemma}[the saddle is the minimum]\label{lem:saddle}
Fix $z \in (0,\tfrac14)$, $\kappa > 0$ and a profile $p$ supported on finitely
many coordinates, and let $B(q) = -\kappa\log q + \sum_j p_j\log H_j(z,q)$ be
the bracket of \eqref{eq:Lam}, defined on $(0, 1/\Qc(z))$. Then $B$ is convex
in $\log q$ there. Suppose moreover that $B'$ changes sign on a subinterval
$[q_1, q_2]$ of that range, in the sense that $B'(q_1) < 0 < B'(q_2)$. Then $B$
has a stationary point in $[q_1,q_2]$, that point is the minimum of $B$ over
the whole range, and
\[
  \Lam(z, p, \kappa) \;=\; \min_q B(q) \;\ge\; \inf_{[q_1, q_2]} B .
\]
\end{lemma}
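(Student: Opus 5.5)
The plan is to substitute $q = e^s$ and show that each summand $\log H_j(z,e^s)$ is convex in $s$. The term $-\kappa\log q = -\kappa s$ is linear, and the weights $p_j$ are non-negative with finite support, so convexity of $B$ in $s$ follows at once.

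For a single $j$, write $H_j(z,q) = \Omega_j[H(z,q)]$. By \eqref{eq:H}, $H(z,q) = \sum_{c\ge0} q^c \Qc(z)^c$. The operator $\Omega_j$ acts on $z$ only and is linear, so
\[
  H_j(z,q) \;=\; \sum_{c \ge 0} b_c(z)\, q^c, \qquad b_c(z) \;=\; \Omega_j[\Qc^c](z).
\]
Each $b_c(z)$ is non-negative for $z \in (0,\tfrac14)$, because $\Qc^c$ has non-negative coefficients and $\Omega_j$ multiplies each coefficient by a positive binomial. The series converges for $q\Qc(z) < 1$. On a slightly larger disc in $z$ we still have $|q|\Qc < 1$, so $\Omega_j$, which is a derivative, preserves convergence. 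Hence for fixed $z$, $H_j$ is a power series in $q$ with non-negative coefficients, finite on $(0,1/\Qc(z))$. For such a series, $s \mapsto \log \sum_c b_c e^{cs}$ is a log-sum-exp function and is convex. Concretely, its second derivative is the variance of $c$ under the tilted law proportional to $b_c e^{cs}$, which is $\ge 0$; alternatively one can cite Hölder's inequality. This gives the convexity claim.

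For the remaining claims, let $\tilde B(s) = B(e^s)$. Since $q \mapsto \log q$ is increasing, $B'(q)$ and $\tilde B'(s)$ have the same sign. Therefore $\tilde B'(\log q_1) < 0 < \tilde B'(\log q_2)$. Because $\tilde B$ is differentiable (indeed real analytic), the intermediate value theorem gives a zero $s^*$ of $\tilde B'$ in $[\log q_1, \log q_2]$. For a convex differentiable function, a stationary point is a global minimiser over the whole interval $(-\infty, \log(1/\Qc(z)))$. So $q^* = e^{s^*} \in [q_1,q_2]$ satisfies the saddle equation of \eqref{eq:Lam}, since $q\,\partial_q$ is $\partial_s$.

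It remains to identify $\Lam$ with $\min_q B$. If the saddle has several solutions, they all lie in the flat set of a convex function and give the same value of $B$, so $\Lam$ is well defined and equals $\min B$. Then $\min B = B(q^*) \ge \inf_{[q_1,q_2]} B$, because $q^* \in [q_1,q_2]$.

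The main obstacle I expect is the justification that $H_j$ is a non-negative power series in $q$ that converges on the whole stated range, so that the log-sum-exp argument applies. This reduces to the commutation of $\Omega_j$ with the $q$-expansion, and to convergence of $\partial_z^j(z^j \cdot)$ applied to $1/(1-q\Qc(z))$ for $q\Qc(z)<1$. I would handle it by noting that $1/(1-q\Qc)$ is analytic in $z$ on a neighbourhood of $z$ in which $|q\Qc|<1$, and then differentiating termwise.
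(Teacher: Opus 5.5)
Your proof is correct and follows the paper's argument: each $\log H_j(z,e^{w})$ is convex as a log-sum-exp of a $q$-series with non-negative coefficients, $q\,B'(q)$ has the sign of $B'(q)$, and the intermediate value theorem plus convexity turn the sign change into a global minimum attained in $[q_1,q_2]$. The differences are minor: the paper gets convergence on $(0,1/\Qc(z))$ from the explicit form $\Omega_j[\Qc^{c}] = \Qc^{c}P_j(c,z)$, whereas you differentiate an analytic function termwise, and you also state explicitly that all saddle solutions give the same value of $B$, a well-definedness point the paper leaves tacit.
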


\noindent Convexity makes a stationary point a minimum but does not produce
one, so the sign change is a hypothesis. Section~\ref{sec:bound} exhibits it at
the point evaluated there. The last inequality is what
Section~\ref{sec:bound} uses: evaluating $B$ over the bracket bounds $\Lam$
from below.

\begin{proof}
Writing $q = e^{w}$, each $H_j(z,\cdot)$ has non-negative coefficients with
$H_j(z,0) = 1$, so $H_j(z,e^{w})$ is a sum of exponentials of linear functions
of $w$ and $\log H_j(z,e^{w})$ is convex in $w$, while $-\kappa w$ is affine.
Hence $B$ is convex in $\log q$. The interval is where the $H_j$
converge: by \eqref{eq:H} the coefficient of $q^c$ in $H$ is $\Qc^c$, and
\[
  \Omega_j\bigl[\Qc^{c}\bigr]
  \;=\; \frac{1}{j!}\,\partial_z^{\,j}\bigl(z^{j}\Qc^{c}\bigr)
  \;=\; \Qc(z)^{c}\,P_j(c, z)
\]
with $P_j$ a polynomial in $c$ of degree $j$, each $z$-derivative of $\Qc^{c}$
bringing down one factor $c$ against $\Qc'/\Qc$; both $\Qc$ and $\Qc'/\Qc$ are
analytic and non-zero at $z < \tfrac14$, so the radius in $q$ is $1/\Qc(z)$ at
every $j$.

For the second part, $dB/dw = qB'(q)$ is non-decreasing in $w$ by that
convexity, and $q > 0$, so $B'$ and $dB/dw$ have the same sign throughout.
A sign change of $B'$ on $[q_1,q_2]$ is therefore one of $dB/dw$, which by
continuity vanishes somewhere in $[q_1,q_2]$. At such a point $B$ is stationary
in $w$ and hence, being convex in $w$, minimal over the whole range; the
minimum is thus attained in $[q_1,q_2]$, where it is at least the infimum of
$B$ over that interval, and at it \eqref{eq:Lam} reads $\Lam$.
\end{proof}

The binomial term \eqref{eq:ent} counts the points of the cell vertically
adjacent to the connecting cell, which is the one Lemma~\ref{lem:valid} governs,
forced wholesale into the gaps. Under that lemma its leaves are free and only
its non-leaves are confined, so the strip machinery of Section~\ref{sec:strips}
applies to it. Theorem~\ref{thm:harris} then separates its count from the
horizontal one at the cost of one factor of $\Qc(z)^c$. The binomial term is
therefore replaced by
\begin{equation}\label{eq:replace}
  \mathcal{E}(\gamma,\kappa)
  \;\longrightarrow\;
  \Lam(z, \gamma f, \kappa) \;-\; \kappa \log \Qc(z),
\end{equation}
and the exponent becomes
\begin{equation}\label{eq:phinew}
  \Phi(z,\gamma,\kappa) \;=\; (1+\gamma)\log\frac{27z}{4}
  \;+\; \Lam(z, f, \kappa) \;+\; \Lam(z, \gamma f, \kappa)
  \;-\; \kappa \log \Qc(z).
\end{equation}

Two properties of \eqref{eq:phinew} are used below. First, it degenerates
correctly: switching the second relaxation off returns BBEP's own exponent.

\begin{proposition}\label{prop:degen}
If the second cell has no leaves, so that its profile is $\{0 : \gamma\}$, then
the right-hand side of \eqref{eq:replace} equals $\mathcal{E}(\gamma,\kappa)$
identically.
\end{proposition}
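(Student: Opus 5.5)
The plan is a direct computation. First I fix what the hypothesis means. A cell with no leaves has only non-leaves, so its $\gamma m$ points cut it into $\gamma m + 1$ strips, all empty. Up to the $O(1)$ correction, which vanishes in the density exactly as in Remark~\ref{rem:medial}, its profile per point of the reference cell is $p_0 = \gamma$ and $p_j = 0$ for $j \ge 1$. Since $\Omega_0$ is the identity, $H_0 = H$, and the bracket of Lemma~\ref{lem:saddle} becomes
\[
  B(q) \;=\; -\kappa\log q \;-\; \gamma\log\bigl(1 - q\Qc(z)\bigr),
  \qquad 0 < q < 1/\Qc(z),
\]
by \eqref{eq:H}.

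Next I solve the saddle equation of \eqref{eq:Lam}. Here $q\,\partial_q H/H = q\Qc/(1-q\Qc)$, so the equation $\gamma\, q\Qc/(1-q\Qc) = \kappa$ has the unique solution
\[
  q_0 \Qc(z) = \frac{\kappa}{\gamma+\kappa},
\]
which lies strictly inside $(0, 1/\Qc(z))$ for $\gamma, \kappa > 0$. Lemma~\ref{lem:saddle} makes $B$ convex in $\log q$, so this stationary point is the minimum and its value is $\Lam(z,\gamma f,\kappa)$. One could also verify the sign change of $B'$ around $q_0$ directly: $B'$ tends to $-\infty$ as $q \to 0$ and to $+\infty$ as $q \to 1/\Qc$.

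Then I substitute. Using $1 - q_0\Qc = \gamma/(\gamma+\kappa)$,
\[
  B(q_0) \;=\; -\kappa\log\kappa + \kappa\log(\gamma+\kappa) + \kappa\log\Qc(z)
  - \gamma\log\gamma + \gamma\log(\gamma+\kappa)
  \;=\; \mathcal{E}(\gamma,\kappa) + \kappa\log\Qc(z),
\]
with $\mathcal{E}$ as in \eqref{eq:ent}. Subtracting $\kappa\log\Qc(z)$, as \eqref{eq:replace} prescribes, leaves exactly $\mathcal{E}(\gamma,\kappa)$. This holds for every $z \in (0,\tfrac14)$, which is the "identically" of the statement.

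No step is a real obstacle; the only point needing care is the first one, the reading of "profile $\{0:\gamma\}$". I will state explicitly that the empty-strip density of an all-non-leaf cell is $\gamma$ per point of the first cell, in the same normalisation as $\gamma f$ in \eqref{eq:phinew}. I will also note, as a sanity check on that normalisation, that the identity is the exponential form of $\sum_c q^c\Qc^c\binom{\gamma m + c}{c}$, the unrelaxed count $\binom{m'+c}{c}$ with $m' = \gamma m$.
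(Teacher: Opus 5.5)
Your proposal is correct and follows the paper's proof essentially verbatim. You solve the saddle equation $\gamma\,q\Qc/(1-q\Qc)=\kappa$ to get $q\Qc=\kappa/(\gamma+\kappa)$, substitute into the bracket, and let the subtracted $\kappa\log\Qc(z)$ absorb the remaining $q$-dependence, which leaves $\mathcal{E}(\gamma,\kappa)$. Your extra remarks are harmless additions that the paper leaves implicit: the convexity argument that the stationary point is the minimum, and the consistency check of $\{0:\gamma\}$ against the binomial $\binom{\gamma m+c}{c}$.
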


\begin{proof}
With $p = \{0:\gamma\}$ we have $H_0 = H = 1/(1-q\Qc)$, so
$q\,\partial_q H_0/H_0 = q\Qc/(1-q\Qc)$ and the saddle equation
$\gamma\, q\Qc/(1-q\Qc) = \kappa$ gives $q\Qc = \kappa/(\gamma+\kappa)$. Then
$\Lam = -\kappa\log q + \gamma\log\frac{1}{1-q\Qc}
= -\kappa \log q + \gamma\log\frac{\gamma+\kappa}{\gamma}$, and subtracting
$\kappa\log \Qc$ eliminates $q$ through
$\log q + \log \Qc = \log\frac{\kappa}{\gamma+\kappa}$, leaving
$(\gamma+\kappa)\log(\gamma+\kappa) - \gamma\log\gamma - \kappa\log\kappa$.
\end{proof}

Second, the two $\Lam$'s of \eqref{eq:phinew} govern cells of opposite
orientation, and the following identifies the two readings, so that one profile
serves both.

\begin{lemma}[the two orientations]\label{lem:reflect}
Reflection in the anti-diagonal maps the staircase to itself, exchanges
$\Av(213)$ with $\Av(132)$, and fixes $1324$. It carries a vertical domino to a
horizontal one and back; it carries the right-to-left maxima of an $\Av(213)$
cell to the left-to-right minima of the $\Av(132)$ cell it becomes, so leaves go
to leaves and non-leaves to non-leaves; and it carries the horizontal strips of
the one to the vertical strips of the other, preserving the number of leaves in
each. A cell and its image therefore have the same strip profile, and the same
$H_j$ of \eqref{eq:H} counts the interleavings available to each.
\end{lemma}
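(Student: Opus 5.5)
The plan is to write the anti-diagonal reflection explicitly as $\rho(x,y) = (N+1-y,\, N+1-x)$ on the plane of an $N$-point gridded permutation, and to verify each clause of the lemma directly from this formula.

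\emph{Staircase, cell classes and the pattern.} The map $\rho$ reverses both coordinates and swaps them. On a permutation matrix this sends $\pi$ to $(\pi^{-1})^{rc}$, the inverse composed with reverse-complement. A direct check gives $\rho(213) = 132$, $\rho(132) = 213$ and $\rho(1324) = 1324$. For the staircase, $\rho$ maps the grid of columns and rows to rows and columns in reverse order. The descending staircase, whose cells run from the top left to the bottom right, therefore goes to itself with its cell order reversed. Diagonal cells become subdiagonal and conversely, matching the exchange of $\Av(213)$ and $\Av(132)$; the shift in cell parity is harmless because the staircase is infinite and is only needed up to translation. A pair sharing a column becomes a pair sharing a row, so $(V)$ pairs and $(H)$ pairs are exchanged, and vertical dominoes are exchanged with horizontal ones.

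\emph{Leaves.} A point $P$ is a right-to-left maximum when no point lies strictly to its upper right. The map $\rho$ sends the upper-right quadrant of $P$ to the lower-left quadrant of $\rho(P)$. Indeed, if $x' > x$ and $y' > y$, then $N+1-y' < N+1-y$ and $N+1-x' < N+1-x$. So right-to-left maxima go exactly to points with nothing to their lower left, which are the left-to-right minima. Since $\rho$ is an involution, the converse also holds, and leaves correspond to leaves and non-leaves to non-leaves.

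\emph{Strips and the count.} The horizontal strips of a vertical-domino cell are cut by the values of its non-leaves. Under $\rho$, value becomes reversed position, so these cuts become the positions of the image non-leaves, which cut vertical strips in reverse order. Each leaf stays inside its strip's image because $\rho$ preserves betweenness along each axis. So the multiset of leaf counts per strip, and hence the strip profile, is preserved. Finally, the interleaving of the cell with its connecting cell along the shared axis is carried to the interleaving along the other axis. The connecting cell's skew components are still skew components after $\rho$, because $\rho$ maps a skew sum to a skew sum (the components appear in reverse order) and preserves skew indecomposability. The count $\prod H_{a_i}$ depends only on the multiset of strip sizes and the component sequence, so by Definition~\ref{def:placement} the same $H_j$ applies to both cells.

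The only delicate step is keeping the axis bookkeeping straight: value cuts must become position cuts, and skew decomposition must be preserved rather than turned into a direct sum. I would settle both with the explicit formula for $\rho$ above.
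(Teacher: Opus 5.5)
Your proposal is correct and follows the paper's proof. Like the paper, you write the reflection as $(x,y)\mapsto(N+1-y,\,N+1-x)$ and check the cell classes, the grid, the leaves and the strips clause by clause; your extra check on the connecting cell's skew components spells out what the paper compresses into ``the profiles agree, which is all that \eqref{eq:H} depends on.''

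Two asides are wrong, though neither affects the argument. First, the reflection fixes the anti-diagonal pointwise and transposes the grid, so it preserves the order of cells along the staircase rather than reversing it; the image is the staircase shifted by one cell, which your parity remark already absorbs. Second, $\rho(\alpha\ominus\beta)=\rho(\alpha)\ominus\rho(\beta)$, so the skew components keep their order rather than appearing reversed.
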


\begin{proof}
The reflection sends the point $(i,v)$ of an $n$-point cell to
$(n+1-v, n+1-i)$. It reverses both the position and the value order, so it
exchanges the two coordinates; a cell avoiding $213$ goes to one avoiding the
reverse complement of $213$, which is $132$, and back. On the staircase it sends
the diagonal cells to the subdiagonal ones and fixes the grid, and a domino
occupying a column goes to one occupying a row. A right-to-left maximum of the
first cell, a point with nothing above and to its right, goes to a point with
nothing below and to its left, which is a left-to-right minimum. The non-leaves,
being the complement, correspond too, and the horizontal cuts they make in the
first cell become vertical cuts in the second, taking the strip between two
consecutive non-leaves to the strip between their images. Leaf counts are
preserved strip by strip, so the profiles agree, which is all that
\eqref{eq:H} depends on.
\end{proof}

We may now state what carries a positive value of \eqref{eq:phinew} to a bound
on the growth rate. Write $\mathcal{B}^{\alpha,\beta}_m$ for BBEP's family: the
balanced dominoes with $m$ points in each cell, at least $\alpha m$ leaves and
at least $\beta m + 1$ empty strips in each cell. By their Proposition 15 it has
growth rate $27/4$ whenever $\alpha < 5/9$ and $\beta < 5/27$.

A member of that family is constrained only by two lower thresholds. Leaf and
strip counts being integers, those thresholds are the $\lceil\alpha m\rceil$
and $\lceil\beta m\rceil + 1$ that \eqref{eq:equit} is written in.
Before the equitable profile can be read against such a member the minimum has
to be carried from a cell's own counts down to them. The two steps that do it
are BBEP's \cite[\S7.2]{bbep}.

\begin{lemma}[from the thresholds to one profile]\label{lem:equitbound}
Let $\alpha \in [11/20, 5/9)$, $\beta \in [7/40, 5/27)$ and $m \ge 32$, and let
$D$ be a domino cell on $m$ points carrying at least $\lceil\alpha m\rceil$
leaves and at least $\lceil\beta m\rceil + 1$ empty strips. Then,
coefficientwise in $z$ and $q$,
\[
  \prod_i H_{a_i}(z,q) \;\ge\;
  \prod_{j \in \{0,2,3\}} H_j(z,q)^{e_j(m)} ,
\]
where $(a_i)$ is the strip profile of $D$ and $e_0, e_2, e_3$ are as in
\eqref{eq:equit}.
\end{lemma}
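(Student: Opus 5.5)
We argue in two steps, both monotone moves on the strip profile. Each move is coefficientwise in $z$ and $q$. Since every $H_j$ has non-negative coefficients, a coefficientwise inequality between two factors survives multiplication by any product of further $H$'s.

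The first step lowers the counts to the thresholds. Suppose $D$ has more than $\lceil\alpha m\rceil$ leaves. Then we delete a leaf from a strip carrying $j\ge 1$ leaves, which replaces the factor $H_j$ by $H_{j-1}$. Since $\Omega_j[z^n]=\binom{n+j}{j}z^n$ is coefficientwise non-decreasing in $j$ and $H$ has non-negative coefficients, we have $H_{j-1}\le H_j$ coefficientwise, so the product can only decrease. We delete leaves from non-empty strips that carry at least two leaves first. This way the number of empty strips does not rise.

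Now suppose $D$ has more than $\lceil\beta m\rceil+1$ empty strips. We remove the surplus as follows. The total number of strips is fixed at $m-\lceil\alpha m\rceil+1$ once the leaf count is fixed. So we cannot delete an empty strip outright; instead we trade. We move a leaf from a strip with $j\ge2$ leaves into an empty strip, giving the profile move $H_jH_0\to H_{j-1}H_1$. By BBEP's Proposition 18 the sequence $(H_j)$ is log-convex. Hence $H_{j-1}H_1\le H_jH_0$ coefficientwise, since the consecutive ratios $H_{i+1}/H_i$ increase. So this move also decreases the product. If no strip with $j\ge2$ leaves exists while there are too many empty strips, the leaf count forces a contradiction with $m\ge32$ and the ranges of $\alpha,\beta$. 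This is a counting check we would carry out from \eqref{eq:equit}.

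After these moves, $D$'s profile has exactly $\lceil\alpha m\rceil$ leaves, $e_0(m)$ empty strips, and $m-\lceil\alpha m\rceil+1$ strips in all. Here the total strip count is pinned because the non-leaves number $m-\lceil\alpha m\rceil$. One subtlety is that deleting leaves changes the number of non-leaves, hence the number of strips. We handle this by treating the abstract profile (a multiset of $a_i$) rather than an actual cell. We also allow a deleted leaf to become a non-leaf that splits its strip, which only adds a factor $H_0\ge1$ and splits $H_j$ into $H_{j_1}H_{j_2}$. We would verify $H_{j_1}H_{j_2}\le H_{j_1+j_2}H_0$, which is again log-convexity.

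The second step equalises the non-empty strips. Among profiles with fixed $e_0$ empty strips and fixed totals of non-empty strips and leaves, the standard majorisation argument from log-convexity applies. The move $H_{i}H_{k}\to H_{i+1}H_{k-1}$ for $i+1\le k-1$ decreases the product coefficientwise. Repeating it drives the non-empty part of the profile to the equitable one. By the discussion after \eqref{eq:equit}, that profile uses only parts $2$ and $3$, in numbers $e_2(m),e_3(m)$; this is where $\alpha\ge 11/20$, $\beta\ge7/40$ and $m\ge32$ enter.

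The main obstacle is the bookkeeping in the first step. We must confirm that the reductions can always be ordered so that they stay within non-empty strips and land exactly on $(e_0,e_2,e_3)$ with all $e_j\ge0$. The inequalities themselves are one-line consequences of Proposition~18.
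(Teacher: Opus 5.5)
Your proposal has a genuine gap in the first step.

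Deleting a surplus leaf, $H_j \to H_{j-1}$, is a legitimate coefficientwise decrease. But it does not change the number of non-leaves; contrary to your remark, it is relabelling a leaf that does. So the profile it produces has $m-\ell+1$ strips, while the target $(e_0,e_2,e_3)$ has $m-\lceil\alpha m\rceil+1$, strictly more whenever $\ell>\lceil\alpha m\rceil$. Your later claim that the strip count is ``pinned'' at $m-\lceil\alpha m\rceil+1$ is therefore false for the profile you have built. The only way your list adds a strip is by multiplying in a factor $H_0$, and that raises the product. Fewer strips with the same leaves need not dominate more: already $H_0 \le H_0^2$.

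The move that does reach the right strip count is the one in your patch: make a surplus leaf a confined divider, which replaces $H_j$ by $H_{j_1}H_{j_2}$ with $j_1+j_2=j-1$. For that you need $H_{j_1}H_{j_2}\le H_{j_1+j_2+1}$ coefficientwise. Log-convexity gives $H_{j_1}H_{j_2}\le H_{j-1}H_0$ and then stops, leaving $H_{j-1}H_0\le H_j$ unproved. Your phrase ``only adds a factor $H_0\ge 1$'' is a step in the wrong direction for a lower bound. No chain of log-convexity and monotonicity in $j$ can supply it either: each of those inequalities compares products with equally many factors, and the one you need has fewer factors on the larger side.

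The paper gets this inequality at the level of placements, not profiles. It adds $\ell-\lceil\alpha m\rceil$ of the leaves to the confined set $S$ of Definition~\ref{def:placement}. Since $S'\supseteq S$, every placement admissible for $S'$ is admissible for $S$, so $N(\mathbf w;D')\le N(\mathbf w;D)$ for every $\mathbf w$. The strip factorisation turns this into the coefficientwise comparison. Directly, $H_aH_b\le H_{a+b+1}$ holds by concatenating the two component blocks and putting the extra point at the junction; the position of the $(a+1)$-st point recovers the cut, so the map is injective. The paper also notes that a relabelling can create empty strips but never destroys one, so $D'$ still meets the threshold $\lceil\beta m\rceil+1$.

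With that step in place of deletion, the rest of your argument goes through. Your trade $H_jH_0\to H_{j-1}H_1$ for $j\ge 2$ is a valid log-convexity transfer that lowers the empty-strip count. Your equalisation among the non-empty strips is the paper's majorisation step; the paper only does it in the other order, equalising first and then comparing equitable profiles at successive empty-strip counts.
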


\begin{proof}
Write $\ell$ for the leaf count of $D$ and $n_0$ for its empty-strip count, and
recall that a cell on $m$ points with $\ell$ leaves has $m - \ell + 1$ strips.
Four comparisons carry the left side to the right, the first at the level of
placements and the rest at the level of profiles.

First, lower the leaf count to its threshold. Let $S$ be the non-leaves of $D$,
choose any $\ell - \lceil\alpha m\rceil$ of its leaves, and let $S'$ be $S$
together with those; write $D'$ for the pair $(D, S')$, which by
Definition~\ref{def:placement} has exactly $\lceil\alpha m\rceil$ free points.
Since $S' \supseteq S$, every condition Definition~\ref{def:placement} imposes
on $D$ it imposes on $D'$, so the identity injects the admissible placements of
$D'$ into those of $D$.
Hence $N(\mathbf{w};D') \le N(\mathbf{w};D)$ for every $\mathbf{w}$, and the
strip product of $D'$ is at most that of $D$ coefficientwise.

Second, $D'$ still meets the empty-strip threshold. Relabelling a leaf as a
non-leaf makes it a divider, splitting the strip that held it into two strips
carrying one leaf fewer between them; a strip with one leaf becomes two empty
ones, and no split destroys an empty strip. So the empty-strip count $n_0'$ of
$D'$ satisfies $n_0' \ge n_0 \ge \lceil\beta m\rceil + 1$.

Third, replace $D'$ by an equitable profile. Among the profiles with
$m - \lceil\alpha m\rceil + 1$ strips of which $n_0'$ are empty and
$\lceil\alpha m\rceil$ leaves in all, Proposition 18 of BBEP minimises the
product at the equitable distribution of those leaves among the non-empty
strips, so $D'$'s own profile may be replaced by that one.

Fourth, $n_0'$ may be lowered to $\lceil\beta m\rceil + 1$. The equitable
profiles at $n_0'$ and at $n_0' - 1$ both have $m - \lceil\alpha m\rceil + 1$
parts summing to $\lceil\alpha m\rceil$, and the second is majorised by the
first, spreading the same leaves over one more non-empty strip. Proposition 18
states that $(H_j)$ is log-convex, which in BBEP's sense is
$H_iH_{j+1} \ge H_{i+1}H_j$ coefficientwise at every pair $i < j$, not merely
at adjacent ones; taking $i = b$ and $j = a-1$ gives
$H_aH_b \ge H_{a-1}H_{b+1}$ whenever $a \ge b+2$, so the product does not
increase under a transfer of one unit from a part of size $a$ to one of size
$b \le a-2$, and a
majorised vector is reached from the majorising one by finitely many such
transfers. The condition is on each transfer and not on the vectors it passes
through, which may well carry a one-leaf part; every transfer is between parts
differing by at least two, which is all the inequality asks. Equivalently,
$\prod_i H_{a_i}$ is Schur-convex in $(a_i)$ because $(H_j)$ is log-convex.

At $\lceil\alpha m\rceil$ leaves and $\lceil\beta m\rceil + 1$ empty strips the
equitable profile is \eqref{eq:equit}.
\end{proof}

\begin{proposition}[transfer]\label{prop:transfer}
Let $\alpha \in [11/20, 5/9)$ and $\beta \in [7/40, 5/27)$ with
$3\alpha + 2\beta > 2$, let $f$ be the equitable profile \eqref{eq:fprofile} at
those densities, and let $z_0 \in (0, \tfrac14)$ and $\kappa > 0$. Suppose that
the bracket $B$ of Lemma~\ref{lem:saddle} at $(z_0, f, \kappa)$ has $B'$
changing sign on a subinterval of $\bigl(0, 1/\Qc(z_0)\bigr)$, so that
$\Lam(z_0, f, \kappa)$ is defined, and that $\Phi(z_0, 1, \kappa) > 0$ in
\eqref{eq:phinew}. Then $z_0$ lies outside the
radius of convergence of the generating function of $\Av(1324)$, and
$\gr(\Av(1324)) \ge 1/z_0$.
\end{proposition}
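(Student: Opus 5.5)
We follow BBEP's Section 7.3 argument with our exponent in place of theirs. We construct a family of staircase-gridded permutations, count it at the value $z_0$ with a lower bound whose logarithm per period is $m\,\Phi(z_0,1,\kappa)+o(m)$, and conclude that the generating function of $\Av(1324)$ diverges at $z_0$.

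The family is built over a fixed large $m$ and a number $P$ of periods. Each domino is taken from $\mathcal{B}^{\alpha,\beta}_m$, balanced with $m$ points per cell, which is the choice $\gamma=1$. Each connecting cell is taken with exactly $c=\lfloor\kappa m\rfloor$ skew components and a free number of points. The points are interleaved so that every non-leaf of every domino cell lies in a gap $G(\mathbf{w})$ of the adjacent connecting cell. Corollary~\ref{cor:closure} shows that every such gridded permutation avoids $1324$. The map from these gridded objects to permutations is at most $e^{o(Pm)}$-to-one, as in BBEP, since griddings of a permutation in the staircase are few per period. So it suffices to show that the weighted gridded count $\sum z_0^{|\pi|}$ grows like $e^{Pm(\Phi+o(1))}$.

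The count factorises over periods, and it is best to organise it around the connecting cells. Fix the dominoes. Each connecting cell $C$ meets one cell $D_{\mathrm h}$ in value and one cell $D_{\mathrm v}$ in position, and it contributes $\sum_C z_0^{|C|}N_{\mathrm h}N_{\mathrm v}$ over the cells $C$ with $c$ components. Theorem~\ref{thm:harris} bounds this below by $Q(z_0)^{-c}$ times the product of the two single sums. By Definition~\ref{def:placement}, each single sum is the $q^c$-coefficient of $\prod_i H_{a_i}(z_0,q)$. Lemma~\ref{lem:equitbound} bounds each such product coefficientwise below by $\prod_j H_j^{e_j(m)}$, uniformly over the family, and Lemma~\ref{lem:reflect} lets us use the same profile for both orientations. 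The domino count gives $(27/4)^{2m+o(m)}$ per domino by BBEP's Proposition 15, together with $z_0^{2m}$. This produces the $(1+\gamma)\log(27z_0/4)$ term at $\gamma=1$, and the factor $Q(z_0)^{-c}$ produces $-\kappa\log Q(z_0)$.

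The main obstacle is extracting the $q^c$-coefficient of $\prod_j H_j(z_0,q)^{e_j(m)}$ with exponential rate $\Lam(z_0,f,\kappa)$, that is, showing $\frac1m\log[q^{\kappa m}]\prod_j H_j^{e_j(m)}\to\Lam$. The upper bound is Chernoff-type and immediate. For the lower bound we use an exponential tilt. By Lemma~\ref{lem:saddle} the hypothesis gives an interior stationary point $q^*$. Under the tilted law proportional to $q^{*c}[q^c]\prod_j H_j^{e_j}$, the component count is a sum of $\Theta(m)$ independent variables with mean $\kappa m+O(1)$ and finite variance, because $q^*<1/Q(z_0)$. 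A local limit theorem, or the cruder statement that a window of width $O(\sqrt m)$ around the mean carries positive mass, gives the lower bound up to $e^{o(m)}$. If necessary we allow $c$ to range over that window and absorb the resulting polynomial factor. The rounding in $e_j(m)$ against $f_jm$ costs $O(1)$ factors and does not affect the rate. The condition $3\alpha+2\beta>2$ guarantees $f_3>0$, so the equitable form \eqref{eq:equit} applies for large $m$.

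Collecting the pieces, the per-period log weight is $m\,\Phi(z_0,1,\kappa)+o(m)$. Fix $m$ large enough that this is positive, and let $P\to\infty$. Then $\sum_\pi z_0^{|\pi|}$ over $\Av(1324)$ diverges, so the radius of convergence is at most $z_0$, which gives $\gr\ge1/z_0$. Throughout, by Remark~\ref{rem:values}, we compare values at $z_0$ and never extract coefficients in $z$.
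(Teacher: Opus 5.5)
Your family, the Harris separation at each connecting cell with the dominoes held fixed, the use of Lemma~\ref{lem:equitbound} and Lemma~\ref{lem:reflect}, and the bookkeeping that gives $m\,\Phi(z_0,1,\kappa)+o(m)$ per domino-plus-connecting-cell all match the paper's proof. Your exponential tilt with a local limit theorem proves directly what the paper takes from BBEP's Lemma~19, and it works: the factor $H_0^{e_0}$ makes every $q$-coefficient positive, so there is no lattice obstruction, and $e_j(m)-f_jm=O(1)$ puts the tilted mean within $O(1)$ of $\kappa m$.

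The step that is not justified is the transfer from gridded objects to permutations. The connecting cells have a free number of points, so your family contains permutations of unbounded length $n$. The bound on griddings available from BBEP, and used in the paper, is $\binom{n+3k+1}{3k+1}\binom{n+3k}{3k}$ for $6k+2$ cells. That is polynomial in $n$ of degree $O(k)$, not $e^{o(Pm)}$ uniformly in $n$. So divergence of $\sum_{\pi}z_0^{|\pi|}$ does not follow, and with your order of limits (fix $m$, then $P\to\infty$) the multiplicity has to be controlled quantitatively.

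It can be controlled. Suppose the radius $R$ exceeded $z_0$, and fix $z_1\in(z_0,R)$, so that $|\Av_n(1324)|\le Cz_1^{-n}$. For $P$ periods of six cells, the gridded weight at $z_0$ is then at most $C\sum_n\binom{n+3P+1}{3P+1}^{2}(z_0/z_1)^n\le C'\lambda^{P}$, with $\lambda$ depending on $z_0/z_1$ but not on $m$. You must therefore choose $m$ \emph{after} $z_1$, so that the per-period weight $e^{m\Phi+o(m)}$ exceeds $\lambda$, not merely $1$. Letting $P\to\infty$ then gives a contradiction. What this yields is that the radius is at most $z_0$, which is what the proposition asserts; it does not yield divergence at $z_0$.

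The paper avoids the quantitative step by reversing the limits. At a fixed number $k$ of periods the polynomial multiplicity leaves the radius of $\mathcal{A}_k$ at least that of $\mathcal{A}$. So $z_0<R$ would give $F_{k,m}(z_0)\le\mathcal{A}_k(z_0)<\infty$ uniformly in $m$. This is contradicted by letting $m\to\infty$ and then $k\to\infty$, the last limit also absorbing the extra domino.
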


\noindent The last inequality on $(\alpha,\beta)$ says exactly that $f_3 > 0$.
The other two coordinates are positive of themselves on the stated range, where
$f_0 = \beta \ge 7/40$ and $f_2 = 3 - 4\alpha - 3\beta > 2/9$, while
$f_3 = 3\alpha + 2\beta - 2$ vanishes at the corner $(11/20,\, 7/40)$ and
nowhere else in it, which the hypothesis excludes. The hypothesis on $B'$
makes $\Lam$, and with it $\Phi$, defined; Section~\ref{sec:bound} exhibits the
sign change between $q = 1$ and $q = 9$.

\begin{proof}
Let $\mathcal{P}_{k,m}$ consist of the permutations gridded in the first $6k+2$
cells whose vertical dominoes lie in $\mathcal{B}^{\alpha,\beta}_m$, whose
horizontal dominoes lie in its image under the reflection of
Lemma~\ref{lem:reflect}, whose connecting cells have $c_m$ skew indecomposable
components, and whose adjacencies obey the rule of
Corollary~\ref{cor:closure}, where $c_m$ is the sequence BBEP's Lemma 19
returns two paragraphs below and is fixed before $\mathcal{P}_{k,m}$ is formed.
Write $J_m(z) = [q^{c_m}]\prod_j
H_j(z,q)^{e_j(m)}$, with $e_j$ as in \eqref{eq:equit}; by
Lemma~\ref{lem:equitbound} this is a lower bound on the interleaving count of
one connecting cell against any one adjacent cell of
$\mathcal{B}^{\alpha,\beta}_m$, and by Lemma~\ref{lem:reflect} the same $J_m$
serves both orientations. Write
$P_{k,m}(z)$ for the generating function of $\mathcal{P}_{k,m}$. The cells are
chosen independently, so at every $z \in (0,\tfrac14)$
\begin{equation}\label{eq:Fkm}
  P_{k,m}(z) \;\ge\; F_{k,m}(z) \;=\;
  \bigl|\mathcal{B}^{\alpha,\beta}_m\bigr|^{2k+1} z^{(4k+2)m}\,
  J_m(z)^{2k}\,
  \biggl(\frac{J_m(z)}{\Qc(z)^{c_m}}\biggr)^{2k} ,
\end{equation}
there being $k+1$ vertical dominoes, in the cells $6j+1, 6j+2$ for
$0 \le j \le k$; $k$ horizontal ones, in the cells $6j+4, 6j+5$ for
$0 \le j \le k-1$; and $2k$ connecting cells, namely $6j+3$ and $6j+6$ for
$0 \le j \le k-1$, which is $6k+2$ cells in all. Each connecting cell carries
one adjacency of each kind: $6j+3$ meets the lower cell of a vertical domino
in an $(H)$ pair and the left cell of a horizontal one in a $(V)$ pair, and
$6j+6$ meets the right cell of a horizontal domino in a $(V)$ pair and the
upper cell of the next vertical one in an $(H)$ pair. Only the last
factor differs from BBEP, where it is $\binom{m + c_m}{c_m}^{2k}$. Two things
change it: by Lemma~\ref{lem:valid} the vertical neighbour's leaves are free, so
its count is a strip product of the same form as the horizontal one, no longer
a binomial; and by Theorem~\ref{thm:harris} the two counts at a shared
connecting cell separate at the cost of one factor $\Qc(z)^{c_m}$. This is
\eqref{eq:replace}. The inequality is one between values and not between
coefficients, as Remark~\ref{rem:values} records.

The remaining comparisons are coefficientwise. With $\mathcal{A}(z)$ the
generating function of $\Av(1324)$ and $\mathcal{A}_k(z)$ that of the
$1324$-avoiding permutations gridded in the first $6k+2$ cells,
\[
  [z^n]P_{k,m} \;\le\; [z^n]\mathcal{A}_k \;\le\;
  \binom{n+3k+1}{3k+1}\binom{n+3k}{3k}\,[z^n]\mathcal{A} ,
\]
the first because $\mathcal{P}_{k,m}$ is a set of gridded $1324$-avoiders by
Corollary~\ref{cor:closure}. For the second, the first $6k+2$ cells of the
staircase occupy $3k+1$ columns and $3k+2$ rows, so a gridding is a choice of
$3k$ column dividers and $3k+1$ row dividers, each sequence weakly increasing
and otherwise unrestricted; that is the count BBEP make in the proof of their
Theorem 9. Only its polynomiality in $n$ is used, and it gives the radius of
convergence of $\mathcal{A}_k$ as at least that of $\mathcal{A}$. Hence if $z_0$
were inside the radius of $\mathcal{A}$ then
$F_{k,m}(z_0) \le P_{k,m}(z_0) \le \mathcal{A}_k(z_0) < \infty$ for every $m$,
giving $\limsup_m F_{k,m}(z_0)^{1/m} \le 1$ and
\[
  \lim_{k}\Bigl(\limsup_{m} F_{k,m}(z_0)^{1/m}\Bigr)^{1/2k} \;\le\; 1 .
\]

It remains to identify that limit. A member of $\mathcal{B}^{\alpha,\beta}_m$
has $2m$ points, so by BBEP's Proposition 15 one has
$|\mathcal{B}^{\alpha,\beta}_m| = (27/4)^{2m + o(m)}$, and the first two factors
of \eqref{eq:Fkm} contribute $\bigl((27z_0/4)^{2}\bigr)^{2k+1}$ to
$F_{k,m}(z_0)^{1/m}$ in the limit. For $J_m$, apply BBEP's Lemma 19 with
$r = 3$, with $F_j = H_j(z_0,\cdot)$ as a series in $q$, and with
$a_{j,m} = e_j(m)$. Its hypotheses hold: each $H_j(z_0,\cdot)$ has non-negative
coefficients by \eqref{eq:Omega}; the $e_j(m)$ are positive integers at every
$m \ge 32$, for which see the paragraph below; and by the hypothesis on $B'$
together with Lemma~\ref{lem:saddle} the saddle $q_0$ exists and lies in the
subinterval on which that sign change occurs, hence strictly inside the common
radius $1/\Qc(z_0)$.

The counts \eqref{eq:equit} carry ceilings, so $e_j(m)$ is not $f_jm$ and the
profile fed to the lemma is not $f$ itself. The lemma asks only for positive
integers with $a_{j,m}/m \to \alpha_j$ and states its conclusion at the limit
$\alpha_j$, which is the form used here; ceilings move $\lceil\alpha m\rceil$
and $\lceil\beta m\rceil$ by less than one, so $e_0/m$, $e_2/m$ and $e_3/m$
tend to $\beta$, $3 - 4\alpha - 3\beta$ and $3\alpha + 2\beta - 2$, which by
\eqref{eq:fprofile} are $f_0$, $f_2$ and $f_3$. The lemma therefore
returns a sequence $c_m$ with $c_m/m \to \kappa$ for which
$J_m(z_0)^{1/m} \to e^{\Lam(z_0,f,\kappa)}$, the exponent read at $f$ and not
at the finite-$m$ counts. One sequence serves both
adjacencies of a connecting cell because there is only one coefficient to
extract: $J_m$ is built from the threshold profile \eqref{eq:equit} and not from
any individual cell's, and by Lemma~\ref{lem:equitbound} it bounds below the
possibilities at either adjacency, in either orientation by
Lemma~\ref{lem:reflect}. Collecting,
and using $c_m/m \to \kappa$ in the last factor,
\[
  \lim_{k}\Bigl(\limsup_{m} F_{k,m}(z_0)^{1/m}\Bigr)^{1/2k}
  \;=\; \exp\Bigl(2\log\tfrac{27z_0}{4} + 2\Lam(z_0,f,\kappa)
  - \kappa\log\Qc(z_0)\Bigr)
  \;=\; e^{\Phi(z_0,1,\kappa)},
\]
which exceeds $1$ by hypothesis. So $z_0$ is outside the radius of convergence
of $\mathcal{A}$, that radius is at most $z_0$, and the growth rate, its
reciprocal, is at least $1/z_0$.

It remains to see that the three counts are positive at every $m \ge 32$, which
is the range \eqref{eq:equit} is stated on. For $e_0 = \lceil\beta m\rceil + 1$
that is immediate. Ceilings only raise
$3\lceil\alpha m\rceil + 2\lceil\beta m\rceil$, so
$e_3(m) \ge (3\alpha + 2\beta - 2)m$, positive by hypothesis; and they lower
$e_2$ by less than $7$, so $e_2(m) > (3 - 4\alpha - 3\beta)m - 7 > \tfrac29 m - 7$,
positive once $m \ge 32$.
\end{proof}

\section{The bound}\label{sec:bound}

\subsection{The evaluation}

We evaluate \eqref{eq:phinew} at $\gamma = 1$, which is what
Proposition~\ref{prop:transfer} takes and which the input forces, that
proposition taking balanced dominoes. Whether the exponent rises off that value
is open, and settling it would need an unbalanced family at the same densities.
At $\gamma = 1$ the exponent is
\begin{equation}\label{eq:phione}
  \Phi(z,1,\kappa) \;=\; 2\log\frac{27z}{4} \;+\; 2\,\Lam(z,f,\kappa)
  \;-\; \kappa\log\Qc(z),
\end{equation}
and by Lemma~\ref{lem:saddle} the middle term is $2\min_q B(q)$, where
\[
  B(q) \;=\; -\kappa\log q \;+\!\! \sum_{j \in \{0,2,3\}}\!\! f_j\log H_j(z,q).
\]
Note that $\Lam$ enters \eqref{eq:phione} with a positive coefficient, so a
lower bound on $\min_q B$ is a lower bound on $\Phi$, and
Proposition~\ref{prop:transfer} asks only that $\Phi$ be positive. Everything
below is therefore a lower bound and never an evaluation. Write
\[
  s \;=\; \sqrt{1-4z}, \qquad D \;=\; 2 - q + qs ,
\]
so that $\Qc = (1-s)/2$ and $H_0 = H = 2/D$. Differentiating in $z$ keeps
everything inside $\mathbb{Q}(s,q)$, since
\begin{equation}\label{eq:rules}
  \frac{ds}{dz} \;=\; -\frac{2}{s}, \qquad
  \frac{\partial H}{\partial z} \;=\; \frac{q\,H^{2}}{s} ,
\end{equation}
the second because $\partial_z D = -2q/s$ and $H = 2/D$. Carrying out the two
and three differentiations of \eqref{eq:Omega} by \eqref{eq:rules},
\begin{equation}\label{eq:H2}
  H_2 \;=\; \frac{q^{2}\bigl(3s^{5} - 9s^{4} + 10s^{3} - 6s^{2} + 3s - 1\bigr)
  \;+\; 2q\bigl(9s^{4} - 16s^{3} + 6s^{2} + 1\bigr) \;+\; 32s^{3}}
  {4\,s^{3}D^{3}}
\end{equation}
and
\begin{equation}\label{eq:H3}
\begin{aligned}
  H_3 \;=\; \frac{1}{8\,s^{5}D^{4}}\Bigl[\;
  & q^{3}\bigl(5s^{8} - 20s^{7} + 32s^{6} - 28s^{5} + 18s^{4} - 12s^{3}
    + 8s^{2} - 4s + 1\bigr) \\
  &+\; 4q^{2}\bigl(10s^{7} - 29s^{6} + 30s^{5} - 15s^{4} + 6s^{3} - 3s^{2}
    + 2s - 1\bigr) \\
  &+\; 4q\bigl(29s^{6} - 48s^{5} + 15s^{4} + 3s^{2} + 1\bigr)
    \;+\; 128s^{5}\;\Bigr] .
\end{aligned}
\end{equation}
\begin{table}[t]
\centering
\caption{The certificate behind Theorem~\ref{thm:main}. The two densities
$\alpha$ and $\beta$ lie strictly below the $5/9$ and $5/27$ of BBEP's
Propositions 10 and 12, as their Proposition 15 requires, and above the $11/20$
and $7/40$ their equitable solution requires. The bracket is that of
\eqref{eq:Lam}; $q_0$ is the point its tangent is taken at, not a located
minimiser, and the last three rows are lower bounds. Every decimal is
truncated, not rounded.}
\label{tab:cert}
\begin{tabular}{@{}ll@{}}
\toprule
evaluation point $z_0$ & $9604060/10^{8}$ \\
component scale $\kappa$ & $516/1000$ \\
leaf density $\alpha$ & $55555555/10^{8}$ \\
empty-strip density $\beta$ & $18518518/10^{8}$ \\
equitable profile $(f_0, f_2, f_3)$
  & $(0.18518518,\; 0.22222226,\; 0.03703701)$ \\
one component, $\Qc(z_0)$ & $0.107623395192832$ \\
radius of $H_j(z_0,\cdot)$ in $q$, $1/\Qc(z_0)$ & $9.291660$ \\
slopes at the ends, $B'(1)$, $B'(9)$ & $-0.38277$, $\;3.37438$ \\
tangent point $q_0$ & $3.046850759320$ \\
$H_0(z_0,q_0)$ & $1.487901331259$ \\
$H_2(z_0,q_0)$ & $3.725658932481$ \\
$H_3(z_0,q_0)$ & $6.161763006829$ \\
$B(q_0)$, and $|dB/dw|(q_0)$
  & $> -0.1416704254$, $\;< 10^{-13}$ \\
least bracket & $\ge -0.1416704255$ \\
exponent & $\Phi(z_0) \ge 1.2762 \times 10^{-7} > 0$ \\
the bound & $10412263 \cdot 9604060 = 99999998587780 < 10^{14}$ \\
\bottomrule
\end{tabular}
\end{table}

Take
\[
  z_0 = \frac{9604060}{10^{8}}, \qquad
  \kappa = \frac{516}{1000}, \qquad
  \alpha = \frac{55555555}{10^{8}}, \qquad
  \beta = \frac{18518518}{10^{8}} .
\]
Then $\alpha < 5/9$ and $\beta < 5/27$, as BBEP's Proposition 15 requires, the
second read as a density of empty strips by Remark~\ref{rem:medial}; and
$\alpha \ge 11/20$ and $\beta \ge 7/40$, as \eqref{eq:equit} requires; and
$3\alpha + 2\beta = 2.03703701 > 2$, as Proposition~\ref{prop:transfer}
requires. The equitable profile \eqref{eq:fprofile} at those densities is
\[
  f_0 = 0.18518518, \qquad f_2 = 0.22222226, \qquad f_3 = 0.03703701,
\]
all three positive, so the counts \eqref{eq:equit} are positive at every
$m \ge 32$. The radius of $H_j(z_0,\cdot)$ in $q$ is
$1/\Qc(z_0) = 9.291660\ldots$, and
\[
  B'(1) \;=\; -0.38277\ldots \;<\; 0 \;<\; 3.37438\ldots \;=\; B'(9)
\]
is the sign change Lemma~\ref{lem:saddle} and
Proposition~\ref{prop:transfer} ask for, the two points lying inside that
radius. By that lemma the minimiser is confined to $[1,9]$.

Now take the rational $q_0 = 3.046850759320$, at which
\begin{gather*}
  H_0(z_0,q_0) = 1.487901331\ldots, \qquad
  H_2(z_0,q_0) = 3.725658932\ldots, \\
  H_3(z_0,q_0) = 6.161763006\ldots
\end{gather*}
and, writing $w = \log q$,
\[
  B(q_0) \;>\; -0.1416704254, \qquad
  \Bigl|\frac{dB}{dw}(q_0)\Bigr| \;<\; 10^{-13} .
\]
No claim is made that $q_0$ is the minimiser. Since $B$ is convex in $w$ it
lies above its tangent at $q_0$, so for every $q \in [1,9]$
\[
  B(q) \;\ge\; B(q_0) \;+\; \frac{dB}{dw}(q_0)\,\bigl(\log q - \log q_0\bigr)
  \;\ge\; B(q_0) \;-\; \Bigl|\frac{dB}{dw}(q_0)\Bigr| \cdot 1.1142 ,
\]
since $q_0 > 3$ makes $q = 1$ the farther endpoint, at
$\log q_0 = 1.114108\ldots < 1.1142$. Hence
\[
  \Lam(z_0, f, \kappa) \;=\; \min_{q} B(q) \;\ge\; -0.1416704255 ,
\]
the tangent step costing less than $1.12 \times 10^{-13}$, and by
\eqref{eq:phione}
\[
  \Phi(z_0,1,\kappa) \;\ge\; 1.2762 \times 10^{-7} \;>\; 0 .
\]
Proposition~\ref{prop:transfer} therefore gives
$\gr(\Av(1324)) \ge 1/z_0$, and
\[
  10412263 \cdot 9604060 \;=\; 99999998587780 \;<\; 10^{14}
\]
gives $1/z_0 > 10.412263$, which is Theorem~\ref{thm:main}.

The tangent step costs $2.3 \times 10^{-13}$ against a margin of
$1.3 \times 10^{-7}$, and the margin widens as $z_0$ is raised. At $z_0 = 9605000/10^{8}$, with the same
$\kappa$ and the same two densities, the same three functions give
$\Phi(z_0) \ge 2.6 \times 10^{-4}$, which is
$\gr(\Av(1324)) \ge 10.411244$ at two thousand times the margin of
Theorem~\ref{thm:main}. The tangent has to be retaken there, at
$q_0 = 3.046496223878$, the minimiser moving with $z_0$.

\subsection{Controls}\label{sec:controls}

Two readings check \eqref{eq:phinew} against values known independently.
By Proposition~\ref{prop:degen} it degenerates to BBEP's \eqref{eq:phi} when the
second cell has no leaves, and evaluating that degenerate
form at their published $(\gamma, \kappa) = (0.951509,\, 0.496339)$ with the
relaxation on one direction and the equitable profile returns
$z^* = 0.0973613807117$ and $1/z^* = 10.2710129282$, against the algebraic
$10.27101292824530$ of their footnote 3, and a saddle $q_0 = 2.9170545$ against
their published $2.917054$. That is twelve digits, the precision their rounded
six-decimal $(\gamma,\kappa)$ supports.

Switching the relaxation off in both directions leaves both interleavings the
binomial of \eqref{eq:ent}, which depends on the connecting cell only through
its component count, so the sum over connecting cells factors and no separation
is needed. By Proposition~\ref{prop:degen} the exponent is then
$2\log(27z/4) + 2\mathcal{E}(1,\kappa) + \kappa\log\Qc(z)$, its last term the
cell's own weight $\Qc(z)^{c}$ rather than a Harris cost. At $z = 8/81$,
where $\Qc = 1/9$ and $27z/4 = 2/3$, it is stationary at $\kappa = 1/2$ and
vanishes there, $2\log(2/3) + 3\log(3/2) + \log 2 - \log 3 = 0$. Both readings
are exact, so the construction returns their Theorem 9 exactly.

\end{document}